\newtheorem{thm}{Theorem}
\newtheorem{lem}[thm]{Lemma}
\newtheorem{cor}[thm]{Corollary}
\newtheorem{conj}[thm]{Conjecture}
\newtheorem{qst}{Question}
\theoremstyle{definition}
\let\c@table\c@figure
\renewcommand{\geq}{\geqslant}
\renewcommand{\leq}{\leqslant}
\renewcommand\section{\@startsection {section}{1}{\z@}%
                                   {-3ex \@plus -1ex \@minus -.2ex}%
                                   {2ex \@plus.2ex}%
                                   {\normalfont\large\bfseries}}
\renewcommand\subsection{\@startsection{subsection}{2}{\z@}%
                                     {-2.5ex\@plus -1ex \@minus -.2ex}%
                                     {1.5ex \@plus .2ex}%
                                     {\normalfont\normalsize\bfseries}}
\renewcommand\subsubsection{\@startsection{subsubsection}{3}{\z@}%
                                     {-2ex\@plus -1ex \@minus -.2ex}%
                                     {1ex \@plus .2ex}%
                                     {\normalfont\normalsize\bfseries}}
 \renewcommand\paragraph{\@startsection{paragraph}{4}{\z@}%
                                    {1.5ex \@plus.5ex \@minus.2ex}%
                                    {-1em}%
                                    {\normalfont\normalsize\bfseries}}
\renewcommand\subparagraph{\@startsection{subparagraph}{5}{\parindent}%
                                       {1.5ex \@plus.5ex \@minus .2ex}%
                                       {-1em}%
                                      {\normalfont\normalsize\bfseries}}
\DeclareMathOperator{\gon}{gon}
\DeclareMathOperator{\clump}{clump}
\DeclareMathOperator{\tw}{tw}
\DeclareMathOperator{\Div}{Div}
\DeclareMathOperator{\supp}{supp}
\def \mod#1{{\:({\rm mod}\ #1)}}
\newcommand\restr[2]{{
  \left.\kern-\nulldelimiterspace 
  #1 
  \right|_{#2} 
  }}
\title{\bf Sparse graphs of high gonality}
\author{Kevin Hendrey\footnote{Research supported by an Australian Postgraduate Award. \texttt{kevin.hendrey@monash.edu}}\\[1ex]
\normalsize School of Mathematical Sciences\\[-0.5ex]
\normalsize Monash University\\[-0.5ex]
\normalsize Melbourne, Australia}
\date{\normalsize\today}
\begin{document}
\maketitle
\begin{abstract}
By considering graphs as discrete analogues of Riemann surfaces, Baker and Norine (Adv. Math. 2007) developed a concept of linear systems of divisors for graphs. Building on this idea, a concept of gonality for graphs has been defined and has generated much recent interest. We show that there are connected graphs of treewidth 2 of arbitrarily high gonality. We also show that there exist pairs of connected graphs $\{G,H\}$ such that $H\subseteq G$ and $H$ has strictly lower gonality than $G$. These results resolve three open problems posed in a recent survey by Norine (Surveys in Combinatorics 2015).
\end{abstract}

\section{Introduction}\label{intro}
The gonality of a curve is an important and well-studied concept in the area of algebraic geometry. Recently, Baker and Norine \cite{BakerNorine07} developed a framework for translating algebraic geometry concepts to analogous graph theory concepts and proved an analogue of the well-known Riemann-Roch Theorem. Their work has led to intensive and fruitful research in this area (see \cite{AminiCaporaso13,KissTothmeresz15,AminiManjunath10,CaporasoLenMelo15,JamesMiranda13} for example). Within the framework they created, a concept of gonality for graphs has been defined and studied \cite{bruynthesis,BruynGijswijt,OmidKool,devjenkaimit}. Notably, Gijswijt  has shown that computing the gonality $\gon(G)$ of a graph $G$ is NP-hard \cite{Gijswijt05}.

The study of graph gonality is in part motivated by possible relationships to other graph parameters. In a recent survey, Norine \cite{NorineSurvey} discusses the potential relevance of gonality to graph minor theory. Central to much of graph minor theory is the graph parameter known as treewidth (see surveys \cite{surveytreewidth,bodlaender98}). Van Dobben de Bruyn and Gijswijt \cite{BruynGijswijt} have shown that the treewidth $\tw(G)$ of a graph $G$  is a lower bound for its gonality, and we know of no connected graph that has been shown to have gonality greater than its treewidth. In his survey, Norine raises the following questions.
\begin{qst}\label{twgonquest}Is there some function $f$ such that for every connected graph $G$, $\gon(G)\leq f(\tw(G))$?\end{qst}

\begin{qst}\label{minorgonquest}Is it true that for every connected graph $G$ and every connected minor $H$ of $G$, $\gon(H)\leq \gon(G)$?\end{qst}
\begin{qst}\label{subgraphgonquest}Is it true that for every connected graph $G$ and every connected subgraph $H$ of $G$, $\gon(H)\leq \gon(G)$?\end{qst}

In Section \ref{treewidthsection} we answer Question \ref{twgonquest} in the negative, proving the following stronger result.
\begin{thm}\label{twgonans}
For all integers $k\geq 2$ and $t\geq k$, there exists a $k$-connected graph $G$ with $\tw(G)=k$ and $\gon(G)\geq t$.
\end{thm}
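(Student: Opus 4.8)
\emph{The construction.} Fix a depth $d\geq 0$, to be chosen in terms of $k$ and $t$. Let $G(k,0)=K_{k+1}$, and regard each of its $k+1$ subgraphs isomorphic to $K_k$ as an ``open face''. Given $G(k,j)$, build $G(k,j+1)$ by one round: for every open face $F$, add a new vertex adjacent to exactly the $k$ vertices of $F$; this produces a new copy of $K_{k+1}$, we declare $F$ closed (in both the parent and the new copy), and the $k$ new faces of the new copy --- each using the new vertex --- are open. Put $G=G(k,d)$. The copies of $K_{k+1}$ created along the way, joined when they share a face, form essentially the complete $k$-ary tree of depth $d$, and the shared $K_k$'s are a nested family of size-$k$ vertex cuts of $G$. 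For $t=k$ one simply takes $d=0$, so assume $d\geq 1$ below.

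\emph{Treewidth and connectivity.} Each round attaches vertices one at a time, each joined to a $K_k$ already present, so $G$ is a $k$-tree; hence $\tw(G)\leq k$, and $\tw(G)=k$ because $K_{k+1}\subseteq G$. Connectivity follows by induction on the number of attached vertices: if $H$ is $k$-connected and $v$ is added adjacent to all of a $K_k$ in $H$, then $H+v$ is $k$-connected, since any two vertices of $H$ keep their $k$ internally disjoint paths avoiding $v$, and for a path ending at $v$ we use the fan form of Menger's theorem to get $k$ internally disjoint $H$-paths from the other endpoint to the $k$ neighbours of $v$. Thus $G$ is $k$-connected.

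\emph{Gonality: the plan.} I would prove, by induction on $d$, that $\gon(G(k,d))\geq k+d$ --- and it would already suffice to show this quantity tends to infinity with $d$; the base case $\gon(K_{k+1})=k$ is well known, and taking $d=t-k$ then proves the theorem. For the inductive step, suppose some effective divisor $D$ on $G(k,d)$ has $\deg(D)<k+d$ and $r(D)\geq 1$. The basic tool is the theory of $q$-reduced divisors and Dhar's burning algorithm: $r(D)\geq 1$ forces the unique $q$-reduced divisor $D_q\sim D$ to satisfy $D_q(q)\geq 1$ for \emph{every} vertex $q$. Let $S$ be the $K_k$ separating one depth-$1$ gadget's subtree from the rest of $G$, and let $G_S$ be that subtree together with $S$, so that $G_S\cong G(k,d-1)$ as abstract graphs and, crucially, $G_S$ meets the rest of $G$ only in the clique $S$ (there are no edges from $G_S\setminus S$ to the rest of $G$). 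Picking a vertex $q$ outside $G_S$, the effective divisor $D_q$ satisfies $D_q(q)\geq 1$, so $\deg(D_q\restr{}{G_S})\leq \deg(D)-1$. The goal is to show that $D_q\restr{}{G_S}$ --- after a bounded correction by firings supported on $S$ --- has positive rank on $G_S$, contradicting the inductive hypothesis $\gon(G_S)\geq k+(d-1)$.

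\emph{The main obstacle.} The crux is exactly this last claim, because positive rank is \emph{not} inherited by subgraphs in general --- this is the phenomenon behind Question~\ref{subgraphgonquest}, which the paper itself addresses. The leverage is that the only interface between $G_S$ and the remainder of $G$ is the $k$-clique $S$, so the amount by which $D_q$ can be ``helped from outside'' to reach a vertex of $G_S$ is tightly constrained: running Dhar's algorithm from a leaf $q'$ deep inside $G_S$, once the fire has burnt all of $G_S\setminus S$ it can be stopped only if $D_{q'}$ already carries enough chips on $S$ together with the side of $S$ away from $q'$; translating this into a statement about $G_S$ alone --- where firing the set $S$ is permitted --- produces the desired positive-rank divisor on $G_S$. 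The delicate bookkeeping is that a single chip-firing move in $G$ can straddle several of the nested separators at once, so the induction must be arranged so that ``the part of $D$ committed to a given subtree'' is a well-defined quantity, monotone as one moves towards the root; I would secure this by always passing to the $q$-reduced representative for an appropriate leaf $q$ before restricting to $G_S$. Once this transfer lemma is in place, the induction closes and $\gon(G(k,d))\geq k+d$, which gives the theorem.
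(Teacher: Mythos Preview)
Your construction of $k$-connected $k$-trees is fine, and the treewidth and connectivity arguments are correct, but the gonality argument has a genuine gap precisely where you flag it.

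First a minor structural point: $G_S$ is not isomorphic to $G(k,d-1)$. The depth-$1$ copy of $K_{k+1}$ built on the face $S$ has only $k$ open faces --- $S$ itself is declared closed --- so after $d-1$ further rounds $G_S$ is $G(k,d-1)$ with one entire branch removed. This could be patched by reformulating the induction (e.g.\ by working throughout with a version in which only $k$ faces of the root simplex are open), but as written the inductive hypothesis does not apply to $G_S$.

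The real problem is the transfer lemma. You need: from a positive-rank divisor $D$ on $G$, extract a positive-rank divisor on $G_S$ of degree at most $\deg(D)-1$. Your sketch $q$-reduces for some $q\notin G_S$ (so that $\deg(\restr{D_q}{G_S})\leq\deg(D)-1$) and then asserts that $\restr{D_q}{G_S}$, up to firings supported on $S$, has positive rank on $G_S$. Nothing in the sketch establishes this. The Dhar-from-$q'$ heuristic does not close the gap: for a target $q'\in G_S$ the $q'$-reduced divisor $D_{q'}$ certainly has a chip at $q'$, but $D_{q'}$ lives on $G$, not on $G_S$, and its chips may sit in the other subtrees across $S$; the firing sequence in $G$ from $D_q$ to $D_{q'}$ may cross $S$ many times, and you have offered no mechanism that confines the relevant equivalences to $G_S$ while keeping the degree there strictly below $\deg(D)$. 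You yourself note that positive rank is not inherited by subgraphs --- the assertion that a clique interface repairs this is precisely what needs proof, and the phrases ``I would secure this by\dots'' and ``once this transfer lemma is in place'' are doing all the work. As it stands this is a plan for a proof, not a proof.

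For contrast, the paper's argument takes a completely different and much shorter route. It lets $G$ be a sufficiently long path with $k-1$ universal vertices added; treewidth $k$ and $k$-connectivity are then immediate. The key tool is Lemma~\ref{localgonlem}: if $H\subseteq G$ and $H$ has a \emph{universal} vertex $v$, then $\gon(G)\geq\gon(H)$, proved by a direct clump computation in which the single vertex $v$, being adjacent to all of $H-v$, must carry at least $|\clump(H-v,\restr{D}{H-v},w)|$ chips whenever $w$ is to be reached --- exactly the kind of control your clique $S$ does not provide. Taking $H$ to be the fan, the explicit formula of Theorem~\ref{fangonlem} then gives $\gon(G)\geq\gon(H)>t$.
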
 
In terms of relating connectivity, treewidth and gonality, this result is best possible, as we discuss in Section \ref{treewidthsection}.
We also show that the answer to Question \ref{minorgonquest} is ``no'' by showing that fans have unbounded gonality, while each is a minor of some connected graph of gonality 2. In Section \ref{cyclesection}, we present a class of graphs that have unbounded gonality, while each is a subgraph of some connected graph of gonality 3, thus answering Question \ref{subgraphgonquest} in the negative. However, in the special case where the subgraph $H$ has a universal vertex, we show in Section \ref{treewidthsection} that the answer to Question \ref{subgraphgonquest} is ``yes''. 

 It is interesting to ask which results relating to gonality of curves translate to analogous results in the context of graphs. The following is a well-known result in classical Brill-Noether Theory, due to Griffiths and Harris \cite{GriffithsHarris80}.
\begin{thm}\label{genusgonthm}Every generic curve of genus $g$ has gonality $\lfloor (g+3)/2\rfloor$.\end{thm}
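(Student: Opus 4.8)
The plan is to recognize \cref{genusgonthm} as the $r=1$ specialization of the Brill--Noether theorem and to assemble it from the two standard halves of that theory. Recall that the gonality of a smooth projective curve $C$ is the least $d$ for which $C$ carries a $g^1_d$, i.e.\ a line bundle of degree $d$ with at least a $2$-dimensional space of global sections, equivalently the minimal degree of a nonconstant morphism $C\to\mathbb{P}^1$. For integers $g,r,d$ set
$$\rho(g,r,d)=g-(r+1)(g-d+r),$$
the Brill--Noether number; for $r=1$ this is $\rho(g,1,d)=2d-g-2$. The first step is the purely arithmetic observation that $\rho(g,1,d)\ge 0$ exactly when $d\ge (g+2)/2$, so the least such integer $d$ is $\lceil (g+2)/2\rceil=\lfloor (g+3)/2\rfloor$.

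Next I would invoke the existence half of Brill--Noether theory (Kempf, Kleiman--Laksov): for every smooth curve $C$ of genus $g$, if $\rho(g,r,d)\ge 0$ then the variety $W^r_d(C)\subseteq\mathrm{Pic}^d(C)$ parametrizing line bundles of degree $d$ with $h^0\ge r+1$ is nonempty (indeed of dimension at least $\rho$); this is proved via the Thom--Porteous formula applied to a determinantal description of $W^r_d(C)$. Applied with $r=1$ and $d=\lfloor(g+3)/2\rfloor$, it produces a $g^1_d$ on \emph{every} genus-$g$ curve, giving the upper bound $\gon(C)\le\lfloor(g+3)/2\rfloor$ unconditionally, hence in particular for a generic curve.

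The reverse inequality is the genuinely hard input, and it is exactly the statement attributed to Griffiths and Harris: the non-existence half asserts that when $\rho(g,r,d)<0$, the generic curve of genus $g$ (i.e.\ one in a dense open subset of $\mathcal{M}_g$) has $W^r_d(C)=\emptyset$. Taking $r=1$ and $d=\lfloor(g+3)/2\rfloor-1=\lfloor(g+1)/2\rfloor$ gives $\rho(g,1,d)\in\{-1,-2\}<0$ according to the parity of $g$, so the generic curve carries no $g^1_d$ of that degree, i.e.\ $\gon(C)\ge\lfloor(g+3)/2\rfloor$. Combining the two bounds yields equality.

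The step I expect to be the main obstacle is precisely this non-existence statement: it cannot be obtained from a naive dimension count on the universal Picard variety, since such a count only bounds the dimension of the Brill--Noether locus over the generic curve, not its emptiness. The standard route is a degeneration argument: specialize to a carefully chosen singular curve (a generic chain of elliptic curves, or a ``flag curve''), classify the limit linear series it can support in the sense of Eisenbud--Harris, verify that the special fibre admits no (limit) $g^r_d$ when $\rho<0$, and then conclude by upper semicontinuity that the generic fibre admits no $g^r_d$ either. Setting up the limit-linear-series bookkeeping correctly, and in particular choosing a degeneration for which the combinatorics of vanishing sequences can actually be pushed through, is the crux of the whole argument.
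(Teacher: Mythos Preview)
Your outline is a correct sketch of the classical Brill--Noether argument: the existence half via Kempf/Kleiman--Laksov gives the upper bound $\gon(C)\le\lfloor(g+3)/2\rfloor$ for every curve, and the Griffiths--Harris non-existence half (via degeneration and limit linear series) gives the matching lower bound for the generic curve. The arithmetic identifying $\lfloor(g+3)/2\rfloor$ as the least $d$ with $\rho(g,1,d)\ge 0$ is also right.

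However, there is nothing in the paper to compare this against. \Cref{genusgonthm} is not a result the paper proves; it is quoted verbatim as a classical theorem of Griffiths and Harris, cited from the algebraic-geometry literature, and used solely as motivation for the graph-theoretic analogue (the Gonality Conjecture for Graphs, \cref{gonalityconj}). The paper's own contributions are entirely combinatorial and concern gonality of \emph{graphs}: it gives an alternative proof of part~2 of \cref{gonalityconj} by constructing chains of cycles with prescribed gonality, but it never revisits the algebraic-curve statement. So your write-up, while accurate as a summary of the original Griffiths--Harris proof, is supplying content the paper deliberately omits rather than reconstructing anything the paper does.
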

 In the context of graph gonality, the {\it genus} of a connected graph $G$ is the size of a minimum edge set $S$ such that $G-S$ is a tree, and is given by the formula $|E(G)|-|V(G)|+1$.  Baker \cite{Baker08} conjectured the following. 
 \begin{conj}[Gonality Conjecture for Graphs]\label{gonalityconj}For each integer $g\geq 0$:
\begin{enumerate}
\item The gonality of every connected graph of genus $g$ is at most $\lfloor (g+3)/2\rfloor$.
\item There exists a connected graph of genus $g$ and gonality exactly $\lfloor (g+3)/2\rfloor$.
\end{enumerate}
\end{conj}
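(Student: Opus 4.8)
The plan is to attack the two clauses of Conjecture \ref{gonalityconj} by opposite means. In both I work inside the Baker--Norine divisor theory \cite{BakerNorine07}: to prove an upper bound $\gon(G)\le d$ it suffices to exhibit a single effective divisor of degree $d$ and rank at least $1$, while to prove a lower bound $\gon(G)\ge d$ one must show that \emph{every} effective divisor of degree less than $d$ has rank $0$; both directions are handled through $v$-reduced divisors and Dhar's burning algorithm.

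For clause (2), the cleanest route is to import the tropical machinery behind Theorem \ref{genusgonthm}. Take the chain of $g$ loops with Brill--Noether general edge lengths, whose metric gonality is exactly $\lfloor(g+3)/2\rfloor$, and realise it combinatorially by subdividing every edge finely enough. Subdivision leaves the genus unchanged, so the resulting connected graph $G_g$ still has genus $g$; and for a sufficiently fine subdivision the combinatorial gonality of $G_g$ coincides with the metric gonality of the underlying metric graph, by the standard comparison between the divisor theory of a finely refined graph and that of the associated metric graph. Hence $\gon(G_g)=\lfloor(g+3)/2\rfloor$. The points that need care are that this comparison be exact (no rounding loss) once the subdivision is fine enough, and that the small cases $g=0,1$ be checked by hand.

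For clause (1), the plan is constructive: in an arbitrary connected graph $G$ of genus $g$, build an effective divisor of degree at most $\lfloor(g+3)/2\rfloor$ and rank at least $1$. Fix a spanning tree $T$, so $E(G)\setminus E(T)$ has exactly $g$ edges. First simplify $G$: if $u$ is a leaf of $T$, deleting $u$ keeps $G$ connected, does not raise the genus, and (by a short divisor-pushing argument) does not raise the gonality, so one may assume $T$ is a path or a single vertex --- i.e.\ $G$ carries $g$ independent cycles through a bounded vertex set. Then pair up the $g$ non-tree edges, place roughly one chip per pair at a carefully chosen vertex, and check that firing a nested chain of subsets of $T$ sweeps this divisor onto every vertex of $G$; rank at least $1$ is confirmed by running Dhar's algorithm from each vertex in turn.

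The hard part, by a wide margin, is clause (1): it asks for a bound uniform over \emph{all} graphs of a fixed genus, and gonality is a stubbornly global invariant --- recall that computing it is NP-hard \cite{Gijswijt05}. The leaf-deletion reduction only brings us down to graphs with a path spanning tree, but these can still be intricate (many parallel edges, many short cycles meeting a bounded vertex set), and it is genuinely unclear that $\lceil g/2\rceil+1$ chips always suffice; this is precisely where one would expect either a delicate new divisor construction or, conceivably, a counterexample. A realistic programme for clause (1) therefore begins by pinning down the gonality-extremal graphs at each genus and testing the bound on them, using clauses (1) and (2) together to locate where the truth lies.
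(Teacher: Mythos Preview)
First, a scope correction: the paper does \emph{not} prove the full conjecture. Part~1 is still open as far as this paper is concerned (the introduction only notes ``significant progress'' by Cools and Draisma). What the paper does is give an \emph{alternative} proof of Part~2. So any assessment of your proposal has to treat the two parts separately.

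\medskip
\textbf{Part 1.} Your sketch has genuine gaps, and you essentially acknowledge this yourself. Two concrete problems:
\begin{itemize}
\item The leaf-deletion step is set up in the wrong direction. You want to show $\gon(G)\le \lfloor(g+3)/2\rfloor$; deleting a degree-$1$ vertex $u$ of $G$ gives a graph $G'$ with $\gon(G')=\gon(G)$ and the same genus, so that reduction is fine. But if $u$ is a leaf of the spanning tree $T$ that is \emph{not} a leaf of $G$, then deleting $u$ drops the genus, and the bound $\lfloor(g'+3)/2\rfloor$ you would be proving is for the wrong $g$.
\item Even after removing all degree-$1$ vertices, it does not follow that some spanning tree is a path; graphs with minimum degree~$2$ need not have Hamiltonian paths. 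So the reduction to ``$g$ independent cycles through a bounded vertex set'' is not justified, and the chip placement scheme that follows is never made precise.
\end{itemize}
Since the paper does not claim a proof of Part~1 either, there is nothing to compare against here; your honest assessment that this is the hard part is correct.

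\medskip
\textbf{Part 2.} Here your route and the paper's are genuinely different. You propose to realise the Cools--Draisma--Payne--Robeva metric chain of loops combinatorially via fine subdivision and then invoke the comparison between metric and combinatorial gonality. This is morally the original CDPR argument, repackaged; the delicate point you would need to nail down is that the Brill--Noether genericity conditions on edge lengths can be met with \emph{rational} lengths (so that a finite subdivision exists), and that the metric/combinatorial gonalities actually agree on that model rather than merely satisfying an inequality.

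The paper instead proceeds purely combinatorially and self-contained. It builds, for each $g\ge 3$, a graph whose $g$ blocks are all $b$-cycles glued at cut vertices so that the block--cut tree is a path and consecutive cut vertices are at distance $k$ in their shared block. Lemma~\ref{cyclechaingonlem} computes the gonality of such a graph exactly as
\[
\min\bigl(\lfloor (g+3)/2\rfloor,\ \min\{t\in[b]:tk\equiv 0 \pmod b\}\bigr),
\]
so choosing $b$ and $k$ with $\min\{t:tk\equiv 0 \pmod b\}\ge \lfloor(g+3)/2\rfloor$ yields a genus-$g$ graph of gonality exactly $\lfloor(g+3)/2\rfloor$; the cases $g\le 2$ are handled by small grids via Theorem~\ref{twgonthm}. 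This buys a fully finite, elementary argument that avoids metric graphs and the subdivision comparison entirely, at the cost of the block-by-block analysis in Lemmas~\ref{blockgonlem}--\ref{cyclegonlem}. Your approach, if the subdivision comparison is made rigorous, would be shorter but would import more external machinery.
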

Part 2 of this conjecture has been proven by Cools, Draisma, Payne and Robeva \cite{coolsdraismapaynerobeva}. Cools and Draisma \cite{coolsdraisma} have also made significant progress on Part 1 of this conjecture. In answering Question \ref{subgraphgonquest} in Section \ref{cyclesection} of this paper, we provide an alternative proof of part 2 of this conjecture.

\section{Preliminaries}\label{gonalitysection}
All graphs in this paper are simple, finite and undirected. Let $G$ be a graph and let $v$ be a vertex of $G$. Let $N(v)$ denote the set $\{w\in V(G):vw\in E(G)\}$ of {\it neighbours} of $v$, and let $N[v]$ denote the set $N(v)\cup \{v\}$. Let $\delta(G)$ denote the minimum degree of $G$. For every positive integer $t$, let $[t]$ denote the set $\{1,2,\dots,t\}$, and let $[0]:=\emptyset$. The operation of {\it contracting} the edge $vw\in E(G)$ consists of deleting $v$ and $w$ and adding a new vertex adjacent to the remaining neighbours of $v$ and the remaining neighbours of $w$. A graph $H$ is a {\it minor} of a graph $G$ if $H$ can be obtained from some subgraph of $G$ by contracting edges. For every positive integer $k$, $G$ is {\it $k$-connected} if $|V(G)|>k$ and for every subset $S\subseteq V(G)$ of size less than $k$, $G-S$ is connected.

The {\it degree matrix} $\mathcal{D}(G)$ of $G$ is the integer matrix in $\mathbb{Z}^{V(G)\times V(G)}$ ($V(G)$ being ordered arbitrarily) with $\mathcal{D}(G)_{v,v}:=\deg(v)$ for all $v\in V(G)$ and $\mathcal{D}(G)_{v,w}:=0$ for all $w\neq v$. The {\it adjacency matrix} $\mathcal{A}(G)$ of $G$ is the  integer matrix in $\mathbb{Z}^{V(G)\times V(G)}$ with $\mathcal{A}(G)_{v,w}:=1$ if $vw\in E(G)$ and $\mathcal{A}(G)_{v,w}:=0$ otherwise. The {\it Laplacian matrix} $Q(G)$ of $G$ is given by $Q(G):=\mathcal{D}(G)-\mathcal{A}(G)$. We write $Q$ for $Q(G)$ when there is no ambiguity. A {\it divisor} of $G$ is a vector in $\mathbb{Z}^{V(G)}$. Let $\Div(G)$ denote the set of divisors of $G$ and for $D\in \Div(G)$ and $v\in V(G)$, let $D(v)$ denote the value of $D$ in position $v$. The {\it support} $\supp(D)$ of $D$ is the set $\{v\in V(G):D(v)\neq 0\}$. For every subgraph $H\subseteq G$, the {\it restriction} $\restr{D}{H}$ of $D$ to $H$ is the divisor in $\Div(H)$ with $\restr{D}{H}(v):=D(v)$ for all $v\in V(H)$. A divisor of $G$ is {\it effective} if each entry is non-negative. Let $\Div_+(G)$ denote the set of effective divisors of $G$. The {\it degree} of a divisor $D$ is given by 
$$\deg(D):=\sum_{v\in V(G)}D(v).$$
Two divisors $D$ and $D'$ are {\it equivalent}, written $D\sim D'$, if there is some divisor $S$ such that $D'=D-QS$. 
Note that each column of $Q$ sums to 0, so $\deg(QS)=0$ for every divisor $S$, and hence every pair of equivalent divisors have the same degree. The {\it rank} $r(D)$ of a divisor $D$ is the maximum value of $k$ such that for every effective divisor $D'$ of degree $k$, there is some effective divisor equivalent to $D-D'$. Note that $r(D)\geq 1$ if and only if for every vertex $v\in V(G)$ there is some effective divisor $D'$ equivalent to $D$ with $D'(v)\geq 1$. The {\it gonality} of a graph, denoted $\gon(G)$, is the minimum degree of a divisor $D$ of $G$ with $r(D)\geq 1$.

Now consider the following chip-firing game for the graph $G$. First, an initial configuration is selected by assigning a non-negative number of chips to each vertex. Making a move in the game consists of selecting a non-empty subset $A\subseteq V(G)$ and for every edge $vw\in E(G)$ with one endpoint $v\in A$ and one endpoint $w\notin A$, moving one chip from $v$ to $w$. In order for a move to be {\it legal}, there must be a non-negative number of chips on every vertex after the move is performed. An initial configuration is {\it winning} if for every vertex $v\in V(G)$, it is possible to transfer a chip to $v$ via some (possibly empty) sequence of legal moves.

There is a natural correspondence between initial configurations and vectors in $\Div_+(G)$. Suppose that $G$ has chip configuration corresponding $D\in \Div_+(G)$, and a move is made by selecting a set $A\subseteq V(G)$. Let $\mathbf{1}_A$ denote the divisor satisfying $\supp(\mathbf{1}_A):=A$ and $\mathbf{1}_A(v):=1$ for all $v\in A$. By the definition of $Q$, the new configuration corresponds to the divisor $D'=D-Q\mathbf{1}_A$. It follows that every winning configuration corresponds to a divisor of rank at least 1.

The following lemma is due to van Dobben de Bruyn and Gijswijt \cite{BruynGijswijt}.
\begin{lem}\label{chainlem}If $D$ and $D'$  are equivalent effective divisors of $G$, then there is a unique chain of non-empty sets $A_1, A_2,\dots, A_t$ and corresponding sequence of divisors $D_0,D_1,\dots, D_t$ such that 
\begin{itemize}
\item $\emptyset \subsetneq A_1\subseteq A_2\subseteq \cdots \subseteq A_t\subsetneq V(G)$, 
\item $D_0=D$ and $D_t=D'$ and
\item for all $i\in [t]$, $D_i$ is effective and $D_i=D_{i-1}-Q\mathbf{1}_{A_i}$.
\end{itemize}
\end{lem}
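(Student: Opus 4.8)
The plan is to prove existence and uniqueness separately, with existence coming from a greedy "burning" construction and uniqueness from an exchange argument.

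\medskip

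\noindent\textbf{Existence.} Given equivalent effective divisors $D$ and $D'$, write $D'=D-QS$ for some divisor $S\in\Div(G)$. Note that replacing $S$ by $S-c\mathbf{1}_{V(G)}$ does not change $QS$ (since $Q\mathbf{1}_{V(G)}=0$), so we may normalise $S$ so that $\min_{v}S(v)=0$ and $S\ge 0$, and $S\neq\mathbf{1}_{V(G)}$-type degenerate cases are avoided (if $S$ is constant then $D=D'$ and the empty chain works). Let $M:=\max_v S(v)$. For each $i\in\{1,\dots,M\}$ define the superlevel set $A_i:=\{v\in V(G):S(v)\ge M-i+1\}$, so that $\emptyset\subsetneq A_1\subseteq A_2\subseteq\cdots\subseteq A_M\subsetneq V(G)$ and $\sum_{i=1}^{M}\mathbf{1}_{A_i}=\bigl(M-S(v)\bigr)_v$... — actually it is cleaner to index from the top: put $A_i:=\{v:S(v)>M-i\}$ for $i\in[M]$, so $\sum_{i=1}^M \mathbf 1_{A_i}$ has $v$-entry $|\{i\in[M]:S(v)>M-i\}|=S(v)$, hence $\sum_i Q\mathbf 1_{A_i}=QS$ and $D_M=D-QS=D'$. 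The remaining content is that each intermediate divisor $D_j:=D-\sum_{i=1}^{j}Q\mathbf{1}_{A_i}$ is effective. Here I would argue by choosing, among all valid $S$, one that is lexicographically minimal (or minimises $\sum_v S(v)$, then breaks ties); if some $D_j$ failed to be effective at a vertex $v$, then $v$ would have been ``over-fired'' and one could decrease $S$ on the set $A_{j}$ (or on $\{w:S(w)\ge S(v)+1\}$) to obtain a smaller valid $S$ still yielding $D'$ — contradicting minimality. This ``fire the highest vertices first'' idea is exactly Dhar-type burning, and it is the standard route; deleting redundant repeated sets (where $A_i=A_{i+1}$ because $S$ skips a value — it won't, by minimality) or empty moves gives the desired chain with strictly positive effect at each step.

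\medskip

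\noindent\textbf{Uniqueness.} Suppose $A_1\subseteq\cdots\subseteq A_s$ and $B_1\subseteq\cdots\subseteq B_t$ are two such chains, with associated "firing-count" vectors $S^A:=\sum_i\mathbf 1_{A_i}$ and $S^B:=\sum_j\mathbf 1_{B_j}$. Both satisfy $D-QS^A=D'=D-QS^B$, so $Q(S^A-S^B)=0$; since $G$ is connected the kernel of $Q$ is spanned by $\mathbf 1_{V(G)}$, hence $S^A-S^B=c\,\mathbf 1_{V(G)}$ for some integer $c$. Because $A_s\subsetneq V(G)$ and $B_t\subsetneq V(G)$, there are vertices with $S^A$-value $0$ and vertices with $S^B$-value $0$, which forces $c=0$ and $S^A=S^B$. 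Since the chain of sets is exactly the chain of superlevel sets of this common vector $S:=S^A$ (namely $A_i=\{v:S(v)\ge \max S-i+1\}$), and the minimality/effectivity forces $S$ to skip no values between $0$ and $\max S$, the chain — and hence the divisor sequence $D_i=D-\sum_{k\le i}Q\mathbf 1_{A_k}$ — is determined. I would phrase the last step as: the multiset $\{A_1,\dots\}$ is recovered from $S$ as its level sets, and distinctness/nonemptiness/properness pin down the indexing uniquely.

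\medskip

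\noindent\textbf{Main obstacle.} The genuinely substantive point is proving that the level-set (burning) sequence keeps every intermediate $D_j$ effective — i.e. that firing from the top never creates a negative coordinate before we are done. The clean way is the minimality argument sketched above: choose $S\ge 0$ with $D-QS=D'$ minimising $\sum_v S(v)$, and show a premature negative entry lets you shave $S$ on a top level set. Verifying that this shaving still gives an effective $D'$ (not just effective intermediate steps) and still equals the target requires a short but careful check using $D'$ effective and the fact that removing a uniform unit of firing from $\{w:S(w)\ge \ell\}$ changes $QS$ in a controlled, vertex-local way. Everything else — the kernel-of-$Q$ computation, discarding empty or repeated sets, and reading off uniqueness from level sets — is routine.
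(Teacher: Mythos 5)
The paper itself does not prove this lemma (it is quoted from van Dobben de Bruyn and Gijswijt), so I am judging your argument against the standard proof. Your construction of the chain as the superlevel sets of the normalised firing vector $S$, and your uniqueness argument via the fact that $\ker Q=\mathbb{Z}\mathbf{1}_{V(G)}$ for connected $G$ (so that $A_t\subsetneq V(G)$ and $A_1\neq\emptyset$ force $\min S=0$ and $t=\max_v S(v)$), are both correct and are exactly the standard route. The genuine gap is the step you yourself flag as the main obstacle: effectiveness of the intermediate divisors $D_j$. Your proposed mechanism for it cannot work. As your own uniqueness paragraph shows, the set of vectors $S$ with $D-QS=D'$ is a single coset of $\mathbb{Z}\mathbf{1}_{V(G)}$, so imposing $S\geq 0$ and minimising $\sum_v S(v)$ already pins $S$ down completely; there is no ``smaller valid $S$ still yielding $D'$'' with which to derive a contradiction, because subtracting $\mathbf{1}_{A_j}$ from $S$ for a non-empty proper subset $A_j$ changes $QS$ and hence the endpoint. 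The minimality argument is therefore vacuous and proves nothing about the intermediate steps.

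Fortunately, no minimality is needed: effectiveness is a direct computation. With your indexing $A_i:=\{v:S(v)>M-i\}$ and $S_j:=\sum_{i\leq j}\mathbf{1}_{A_i}$, one has $S_j(v)=\max(0,\,S(v)-M+j)$. If $S_j(v)=0$, then $(QS_j)(v)=-\sum_{w\in N(v)}S_j(w)\leq 0$, so $D_j(v)\geq D(v)\geq 0$. If $S_j(v)>0$, then $S_j(v)=S(v)-M+j$ while $S_j(w)\geq S(w)-M+j$ for every $w\in N(v)$, so $(QS_j)(v)\leq (QS)(v)$ and hence $D_j(v)\geq D(v)-(QS)(v)=D'(v)\geq 0$. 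Thus every intermediate entry is at least $\min(D(v),D'(v))$, which closes the gap in one line. Two smaller points: your worry about $S$ ``skipping values'' is misplaced, since the lemma permits $A_i=A_{i+1}$ and the level-set chain with $t=\max_v S(v)$ is the unique admissible one whether or not $S$ takes every value in $\{0,\dots,\max_v S(v)\}$; and in the uniqueness step you should note explicitly that nestedness of any admissible chain forces $A_i=\{v:S(v)\geq t-i+1\}$, which is what reduces uniqueness of the chain to uniqueness of $S$.
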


Now suppose $D$ is a divisor with $r(D)\geq 1$. We may assume that $D$ is effective, since it follows from the fact that $r(D)\geq 0$ that there is some effective divisor in the equivalence class containing $D$. Consider the initial configuration for $G$ corresponding to $D$. Since $r(D)\geq 1$, for every vertex $v$ there is some effective divisor $D'\sim D$ such that $D'(v)\geq 1$. Now Lemma \ref{chainlem} gives a chain of legal moves taking the chip configuration corresponding to $D$ to the chip configuration corresponding to $D'$. Thus, the gonality of a graph can alternatively be defined as the minimum number of chips required for a winning chip configuration.

For several interesting families of graphs, the gonality has been precisely determined by van Dobben de Bruyn and Gijswijt \cite{BruynGijswijt}. In Sections \ref{treewidthsection} and \ref{cyclesection}, we make use of the following result.
\begin{thm}\label{twgonthm}\cite{BruynGijswijt} If $n$ and $m$ are positive integers with $n\geq m$ and $G$ is the $n\times m$ rectangular grid graph, then $\gon(G)=\tw(G)=m$.\end{thm}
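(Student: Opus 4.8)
The plan is to prove \Cref{twgonthm} in two directions. For the lower bound $\gon(G)\geq m$, I would invoke the result of van Dobben de Bruyn and Gijswijt that $\tw(G)\leq\gon(G)$ for every graph $G$, together with the classical fact that the $n\times m$ grid has treewidth exactly $\min(n,m)=m$ (this is standard; it follows, for instance, from the fact that the grid contains a $\min(n,m)$-cop-win configuration or from an explicit path-decomposition-style argument that also gives the matching upper bound $\tw(G)\le m$). So the real content is the upper bound $\gon(G)\leq m$: exhibit a divisor of degree $m$ and rank at least $1$.

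For the upper bound I would use the chip-firing reformulation established in the preliminaries: it suffices to describe a winning configuration with $m$ chips. Label the vertices by coordinates $(i,j)$ with $i\in[n]$ (the ``long'' direction, the rows) and $j\in[m]$ (the ``short'' direction, the columns). Take the divisor $D$ placing one chip on each vertex of the first column, i.e.\ $D(1,j):=1$ for all $j\in[m]$ and $D(i,j):=0$ otherwise; this has degree $m$. I claim $r(D)\geq 1$, equivalently that from this configuration we can route a chip to every vertex. The key move is ``firing the first column'': fire the set $A=\{(1,j):j\in[m]\}$. Each vertex $(1,j)$ has exactly one neighbour outside $A$, namely $(2,j)$, so this is a legal move (every vertex of $A$ had exactly one chip, loses exactly one, and ends at zero; nothing goes negative) and it transports the full column of chips one step to the right, yielding the configuration with one chip on each vertex of the second column. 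Iterating, for each $k\in[n]$ we can legally reach the configuration $D_k$ with one chip on each vertex of column $k$. Since column $k$ covers every row, $D_k$ puts a chip on $(k,j)$ for all $j$, and hence every vertex $(k,j)$ of the grid receives a chip in some reachable configuration. Therefore $D$ is winning and $\gon(G)\leq\deg(D)=m$.

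Combining the two bounds gives $\gon(G)=\tw(G)=m$, as claimed. The main obstacle — really the only nontrivial ingredient — is the lower bound $\gon(G)\geq\tw(G)$, but that is exactly the theorem of van Dobben de Bruyn and Gijswijt which we are permitted to cite; modulo that, the proof is the short explicit column-sweeping argument above. A secondary point requiring a line of justification is that $\tw$ of the $n\times m$ grid equals $m$ when $n\geq m$: the inequality $\tw\leq m$ comes from the obvious path decomposition whose bags are consecutive pairs of columns (each of size $2m$, giving width $2m-1$) refined in the standard way to width $m$, and $\tw\geq m$ already follows from the bound $\tw\le\gon\le m$ together with the reverse, or directly from the well-known Bramble/haven argument for grids; in any case this is folklore and I would simply cite it.
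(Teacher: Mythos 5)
This theorem is quoted in the paper as a result of van Dobben de Bruyn and Gijswijt; the paper gives no proof of its own, so your proposal can only be judged on its own merits. Your overall strategy is the standard one: the lower bound from $\tw(G)\leq\gon(G)$ together with the folklore fact that the $n\times m$ grid has treewidth $\min(n,m)$, and the upper bound from a degree-$m$ divisor supported on one short side, swept across the grid by chip-firing. That outline is correct.

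There is, however, a concrete error in the sweeping step. Your first move is fine: firing $A_1=\{(1,j):j\in[m]\}$ is legal precisely because each vertex of the first column has exactly one neighbour outside $A_1$. But the justification does not iterate the way you claim. Once the chips sit on column $2$ (with $n\geq 3$), firing the single column $\{(2,j):j\in[m]\}$ is \emph{not} legal: each vertex $(2,j)$ has two neighbours outside that set, namely $(1,j)$ and $(3,j)$, so it would have to emit two chips while holding only one. The correct move at step $k$ is to fire the union of the first $k$ columns, $A_k=\{(i,j):i\leq k,\ j\in[m]\}$; then every vertex of $A_k$ other than those in column $k$ has no neighbour outside $A_k$, each vertex of column $k$ has exactly one neighbour outside (its mate in column $k+1$), and the configuration with one chip per vertex of column $k$ advances legally to the one on column $k+1$. (This is also consistent with Lemma \ref{chainlem}, which says the firing sets between equivalent effective divisors form an increasing chain.) With that one-line repair the upper bound argument is complete, and the rest of your write-up, including the width-$m$ path decomposition with bags of size $m+1$ and the cited inequality $\tw\leq\gon$, is sound.
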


\section{Treewidth and Graphs with  a Universal Vertex}\label{treewidthsection}
A vertex $v$ of a graph $G$ is {\it universal} if $N[v]=V(G)$. In this section, we present a formula for the gonality of graphs with a universal vertex. Using this formula, we calculate the gonality of the family of graphs known as fans, consequently proving Theorem \ref{twgonans} and answering Question \ref{minorgonquest}. We also show that the answer to Question \ref{subgraphgonquest} is ``yes'' in the special case where the subgraph $H$ of $G$ has a universal vertex.  

A {\it tree decomposition} of a graph $G$ consists of a tree $T$ together with a function $f$ from $V(T)$ to the set of subsets of $V(G)$, satisfying the following conditions:
\begin{itemize}
\item[1)]for every vertex $v\in V(G)$, there is some vertex $t\in V(T)$ such that $v\in f(t)$,
\item[2)] for every edge $vw\in E(G)$, there is some vertex $t\in V(T)$ such that $\{v,w\}\subseteq f(t)$ and
\item[3)] if $t_3$ is a vertex lying on the path between $t_1$ and $t_2$ in $T$, then $f(t_1)\cap f(t_2)\subseteq f(t_3)$.
\end{itemize}
The width of a tree decomposition is the maximum of $|f(t)|-1$ for $t\in V(T)$, and the {\it treewidth} $\tw(G)$ of a graph $G$ is the minimum width of a tree decomposition of $G$.

In Section \ref{intro}, we claimed that Theorem \ref{twgonans} was best possible. To see this, first note that the claim fails for $k=1$, since every 1-connected graph of treewidth 1 is a tree, and hence has gonality 1. Further, the following elementary result (see \cite{surveytreewidth} for a proof) immediately implies that a $k$-connected graph has treewidth at least $k$.
\begin{lem}\label{mindegtwlem}For every graph $G$, $\tw(G)\geq \delta(G)$.\end{lem}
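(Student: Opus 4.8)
The plan is to argue contrapositively, or rather directly by induction on $|V(G)|$, using a standard property of tree decompositions: every tree decomposition of a graph with at least one edge has a leaf bag that is \emph{not} contained in its neighbour's bag, and the vertices appearing only in that leaf bag form a clique-free ``forgettable'' set. More concretely, fix an optimal tree decomposition $(T,f)$ of $G$ of width $\tw(G)$, and root $T$ arbitrarily. Pick a leaf $t$ of $T$ with parent $s$ (if $T$ is a single node then $V(G)=f(t)$ is a clique on at most $\tw(G)+1$ vertices, so $\delta(G)\le|V(G)|-1\le\tw(G)$ and we are done). By minimality of the decomposition we may assume $f(t)\not\subseteq f(s)$, so there is a vertex $v\in f(t)\setminus f(s)$. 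The key point, which I would verify from condition 3) of the definition of a tree decomposition, is that every bag containing $v$ lies in the subtree rooted at $t$, hence (using that $t$ is a leaf) $v$ appears \emph{only} in $f(t)$; consequently $N[v]\subseteq f(t)$, since every edge at $v$ must be covered by some bag containing $v$.

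From $N[v]\subseteq f(t)$ and $|f(t)|\le\tw(G)+1$ we get $\deg(v)=|N(v)|\le|f(t)|-1\le\tw(G)$, and therefore $\delta(G)\le\deg(v)\le\tw(G)$, as required.

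The only point needing a little care — and the step I would expect to be the main obstacle — is justifying that we may assume $f(t)\not\subseteq f(s)$ for a leaf $t$. If instead $f(t)\subseteq f(s)$ for the chosen leaf, then deleting $t$ from $T$ (and restricting $f$) still yields a tree decomposition of $G$ of the same width, since every vertex and edge covered by $f(t)$ is also covered by $f(s)$, and condition 3) is preserved because removing a leaf cannot create new paths. Iterating this pruning, either we reach a decomposition whose tree is a single node (handled above), or we reach one in which \emph{every} leaf bag fails to be contained in its neighbour's bag, and then the argument of the previous paragraph applies. This establishes $\tw(G)\ge\delta(G)$ for every graph $G$.
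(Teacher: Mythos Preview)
Your argument is correct and is the standard proof of this elementary fact. Note, however, that the paper does not actually supply its own proof of this lemma: it simply states the result and refers the reader to a treewidth survey (``see \cite{surveytreewidth} for a proof''), so there is nothing in the paper to compare against beyond observing that your write-up fills in what the paper leaves to the literature.

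Two small cosmetic points. First, in the single-node case you write that $V(G)=f(t)$ ``is a clique''; this need not be true, but you do not use it---the bound $\delta(G)\le |V(G)|-1\le\tw(G)$ only needs $|V(G)|\le\tw(G)+1$. Second, you announce ``induction on $|V(G)|$'' but what you actually do (and what works cleanly) is prune redundant leaf bags from the tree decomposition; there is no induction on the graph. Neither of these affects correctness.
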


Due to Lemma \ref{chainlem}, we can focus our analysis on pairs $\{D,D'\}$ of effective divisors such that $D'=D-Q\mathbf{1}_A$ for some set $A$. For this reason, for a graph $G$, an effective divisor $D\in \Div_+(G)$ and a vertex $v\in V(G)$, we are interested in the following set. We define the {\it clump} $\clump(G,D,v)$ centred at $v$ to be the intersection of all subsets $S\subseteq V(G)$ such that $v\in S$ and $D(w)\geq |N(w)\cap S|$ for every vertex $w$ not in $S$. This set is useful due to the following lemma.

\begin{lem}\label{clmplem}If $D$ is an effective divisor of a graph $G$ and $A\subseteq V(G)$ is such that $D-Q\mathbf{1}_A$ is also an effective divisor, then for every subgraph $H$ of $G$ and every vertex $w\in V(H)\setminus A$, we have 
$$\clump(H,\restr{D}{H},w)\subseteq V(H)\setminus A.$$\end{lem}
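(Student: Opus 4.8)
The plan is to exhibit a single set that witnesses the desired containment, exploiting the definition of the clump as an intersection. Recall that $\clump(H,\restr{D}{H},w)$ is the intersection of all sets $S\subseteq V(H)$ with $w\in S$ and $\restr{D}{H}(u)\geq |N_H(u)\cap S|$ for every $u\in V(H)\setminus S$, where $N_H$ denotes neighbourhood within $H$. Thus it suffices to check that $S^{\ast}:=V(H)\setminus A$ belongs to this family: since the clump is contained in every member of the family and $S^{\ast}\subseteq V(H)\setminus A$, this would give $\clump(H,\restr{D}{H},w)\subseteq S^{\ast}\subseteq V(H)\setminus A$ at once. Note also that $S^{\ast}$ contains $w$, because $w\in V(H)\setminus A$ by hypothesis.

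First I would unpack the hypothesis that $D':=D-Q\mathbf{1}_A$ is effective. Using $Q=\mathcal{D}(G)-\mathcal{A}(G)$, for a vertex $u\in A$ one computes $(Q\mathbf{1}_A)(u)=\deg_G(u)-|N_G(u)\cap A|=|N_G(u)\setminus A|$, so $D'(u)\geq 0$ forces $D(u)\geq |N_G(u)\setminus A|$. (For $u\notin A$ one has instead $(Q\mathbf{1}_A)(u)=-|N_G(u)\cap A|\leq 0$, so effectiveness of $D'$ gives no constraint there; this is precisely why only the vertices of $A$ enter the argument.)

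Next I would verify the clump condition for $S^{\ast}$. A vertex $u\in V(H)\setminus S^{\ast}$ is exactly a vertex of $V(H)\cap A$, and for such $u$ we have $N_H(u)\cap S^{\ast}=N_H(u)\setminus A$. Since $H$ is a subgraph of $G$ we have $N_H(u)\subseteq N_G(u)$, so $|N_H(u)\cap S^{\ast}|=|N_H(u)\setminus A|\leq |N_G(u)\setminus A|\leq D(u)=\restr{D}{H}(u)$, the last inequality coming from the previous step. Hence $S^{\ast}$ lies in the family defining the clump, and the proof is complete.

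I expect no genuine obstacle here; the argument is short once one hits on the witness set $V(H)\setminus A$. The only points requiring care are keeping the sign of $(Q\mathbf{1}_A)(u)$ straight (so as to identify which vertices actually contribute a constraint) and passing from neighbourhoods in $G$ to the possibly smaller neighbourhoods in the subgraph $H$, which goes in the favourable direction.
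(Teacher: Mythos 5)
Your proof is correct and follows essentially the same route as the paper: both take the witness set $V(H)\setminus A$, use effectiveness of $D-Q\mathbf{1}_A$ at vertices of $A$ to get $D(u)\geq |N_G(u)\setminus A|$, pass to $N_H(u)\subseteq N_G(u)$, and conclude by the intersection definition of the clump. No issues.
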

\begin{proof}
Let $w'$ be a vertex in $V(H)\cap A$. Since $(D-Q\mathbf{1}_A)(w')\geq 0$, we have $D(w')\geq |N(w')\setminus A|$, so $\restr{D}{H}(w')\geq |N_H(w')\setminus A|$. Let $S:=V(H)\setminus A$. Then $S$ is a subset of $V(H)$ such that $w\in S$ and for every vertex $w'\in V(H)\setminus S$, we have  $\restr{D}{H}(w')\geq |N_H(w')\cap S|$. By definition, $\clump(H,\restr{D}{H},w)$ is the intersection of all sets with these properties, so $\clump(H,\restr{D}{H},w)\subseteq V(H)\setminus A$.
\end{proof}

An effective divisor $D$ of a graph $G$ is {\it $v$-reduced} if there is no non-empty subset $A\subseteq V(G)$ such that $v\notin A$ and $D-Q\mathbf{1}_A$ is an effective divisor. In the chip-firing game discussed in Section \ref{gonalitysection}, every legal move from the chip configuration corresponding to a $v$-reduced effective divisor must contain $v$. The following result is due to Baker and Norine \cite{BakerNorine07}.

\begin{lem}\label{vreduced}If $G$ is a connected graph and $D$ is an effective divisor of $G$, then for every vertex $v\in V(G)$, there exists a unique $v$-reduced effective divisor $D'\sim D$.\end{lem}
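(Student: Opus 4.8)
The plan is to prove existence and uniqueness separately, both times leaning on one structural fact about connected graphs: the kernel of the Laplacian $Q$ over $\mathbb{Z}$ is exactly $\mathbb{Z}\mathbf{1}_{V(G)}$. Indeed $\mathbf{1}_{V(G)}\in\ker Q$ because each row of $Q$ sums to $0$, and since $G$ is connected $Q$ has rank $|V(G)|-1$, so there is nothing else in the kernel.

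For existence, I would run a greedy ``fire toward $v$'' procedure. Set $D_0:=D$, which is effective by hypothesis. Given an effective $D_i\sim D$ that is not $v$-reduced, choose a non-empty $A_{i+1}\subseteq V(G)$ with $v\notin A_{i+1}$ and $D_{i+1}:=D_i-Q\mathbf{1}_{A_{i+1}}$ effective, and repeat. Each $D_i$ is effective, equivalent to $D$, and of degree $\deg(D)$, and an effective divisor of fixed degree has every coordinate in $[0,\deg(D)]$, so only finitely many divisors can ever appear. If the procedure never halted we would get $D_i=D_j$ with $i<j$, hence $Q(\mathbf{1}_{A_{i+1}}+\dots+\mathbf{1}_{A_j})=0$, so $\mathbf{1}_{A_{i+1}}+\dots+\mathbf{1}_{A_j}=c\,\mathbf{1}_{V(G)}$ for some integer $c$; reading off the coordinate at $v$, which lies in none of these sets, forces $c=0$, hence every $A_k=\emptyset$, a contradiction. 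So the procedure halts, at a $v$-reduced effective divisor equivalent to $D$.

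For uniqueness, suppose $D'$ and $D''$ are $v$-reduced effective divisors with $D'\sim D''$ but $D'\neq D''$. Lemma~\ref{chainlem} supplies a chain $\emptyset\subsetneq A_1\subseteq\dots\subseteq A_t\subsetneq V(G)$ with $t\geq 1$ and effective divisors $D'=D_0,D_1,\dots,D_t=D''$ with $D_i=D_{i-1}-Q\mathbf{1}_{A_i}$. Since $A_1$ is non-empty and $D_1$ is effective, $v$-reducedness of $D'$ forces $v\in A_1$, hence $v\in A_i$ for all $i$ by nesting. Now I would ``reverse the chain'': using $Q\mathbf{1}_{V(G)}=0$ we have $-Q\mathbf{1}_{A_i}=Q\mathbf{1}_{V(G)\setminus A_i}$, and a short telescoping computation gives, for $0\le j\le t$,
$$D''-\sum_{k=t-j+1}^{t}Q\mathbf{1}_{V(G)\setminus A_k}\;=\;D_{t-j},$$
which is effective. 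Taking $j=1$ shows $D''-Q\mathbf{1}_{V(G)\setminus A_t}$ is effective, while $V(G)\setminus A_t$ is non-empty (as $A_t\neq V(G)$) and does not contain $v$ (as $v\in A_t$), contradicting that $D''$ is $v$-reduced. Hence $D'=D''$.

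I expect the existence half to be routine once finiteness and the kernel computation are in hand; the crux is the uniqueness argument, and within it the bookkeeping identifying the ``reversed'' partial sums built from the complements $V(G)\setminus A_t\supseteq\dots\supseteq V(G)\setminus A_1$ with the already-available effective divisors $D_{t-1},\dots,D_0$. Getting that telescoping right is exactly what lets the $v$-reducedness of $D''$ collide with the first step of the reversed chain, so that is the step I would be most careful with.
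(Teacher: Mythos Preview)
Your proof is correct. The paper itself does not prove this lemma; it simply quotes the result from Baker and Norine, so there is no ``paper's own proof'' to compare against here.

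For what it is worth, your existence argument---pigeonhole on the finitely many effective divisors of fixed degree, followed by the kernel computation $\ker Q=\mathbb{Z}\mathbf{1}_{V(G)}$ to rule out cycles---is a clean alternative to the potential-function or Dhar-burning arguments one usually sees for this fact. Your uniqueness argument is also correct and pleasantly short: invoking Lemma~\ref{chainlem}, observing $v\in A_1\subseteq A_t$, and then firing the complement $V(G)\setminus A_t$ from $D''$ to land on the effective divisor $D_{t-1}$ immediately contradicts the $v$-reducedness of $D''$. Note that only the $j=1$ case of your telescoping identity is actually used, so the general formula (while correct) is unnecessary.
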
 

The following lemma is our main tool for answering Questions \ref{twgonquest} and \ref{minorgonquest}, and it also answers Question \ref{subgraphgonquest} in the case where the subgraph $H$ of $G$ has a universal vertex.
\begin{lem}\label{localgonlem}Let $G$ be a connected graph, let $H$ be a subgraph of $G$, let $v$ be a vertex of $H$ and let $H':=H-v$. If $v$ is universal in $H$ and $|V(H)|\geq 2$, then
\begin{align*}
&\gon(G)\geq\\&\min\{\max \{\deg(D),\deg(D)+|\clump(H',D,w)|:w\in V(H'), D(w)=0\}:D\in \Div_+(H')\}\\&=\gon(H).
\end{align*}
\end{lem}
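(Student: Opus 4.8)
The plan is to prove the equality and the inequality separately; the main content is showing that for a graph $H$ with a universal vertex $v$, the gonality of $H$ equals the quantity
$$
M(H):=\min\{\max \{\deg(D),\deg(D)+|\clump(H',D,w)|:w\in V(H'), D(w)=0\}:D\in \Div_+(H')\},
$$
where $H'=H-v$, and then bootstrapping the inequality $\gon(G)\ge M(H)$ from the case $G=H$ using Lemma~\ref{clmplem} and Lemma~\ref{vreduced}.

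First I would prove $\gon(H)\le M(H)$. Fix an effective divisor $D$ on $H'$ achieving the minimum in $M(H)$, and view $D$ also as a divisor on $H$ (with $D(v)=0$). I claim $\widehat D:=D+\deg(D)\cdot \mathbf{1}_{\{v\}}$ — or rather a divisor of that shape — has rank at least $1$; its degree is $2\deg(D)$, which is too big, so instead the right construction is: put $m:=\max\{\deg(D),\deg(D)+|\clump(H',D,w)|:\dots\}$ chips on $v$ alone and show this dominates. Actually the cleanest route is to take the divisor $D_v$ on $H$ with $D_v(v)=m-\deg(D)\ge 0$ and $D_v(u)=D(u)$ for $u\neq v$, so $\deg(D_v)=m$, and argue $r(D_v)\ge 1$: to move a chip onto a target vertex $w$ with $D(w)=0$, fire the set $A=V(H)\setminus\clump(H',D,w)$ (which contains $v$); since $v$ is universal, every vertex of $\clump(H',D,w)$ gains exactly one chip from $v$, and the clump condition guarantees the move is legal, i.e. $\deg$ of chips leaving $v$ is $|\clump(H',D,w)|\le m-\deg(D)=D_v(v)$. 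After this move $w$ carries a chip. Vertices $w$ with $D(w)\ge 1$ already carry a chip, and $v$ itself does since $D_v(v)\ge 0$ and $\deg D\le m$ lets us fire $V(H)\setminus\{v\}$ if needed. Hence $\gon(H)\le m=M(H)$.

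Next, $\gon(H)\ge M(H)$ and, more generally, $\gon(G)\ge M(H)$. Let $D^\ast$ be a divisor on $G$ with $r(D^\ast)\ge 1$ and $\deg(D^\ast)=\gon(G)$; by Lemma~\ref{vreduced} (applied in $G$) we may take $D^\ast$ to be $v$-reduced and effective. Restrict to $H'$: set $D:=\restr{D^\ast}{H'}\in\Div_+(H')$, and note $\deg(D)\le \deg(D^\ast)=\gon(G)$. It remains to check that for every $w\in V(H')$ with $D(w)=0$ we have $\deg(D)+|\clump(H',D,w)|\le \gon(G)$. Since $r(D^\ast)\ge 1$, there is an effective $D'\sim D^\ast$ with $D'(w)\ge 1$, and by Lemma~\ref{chainlem} this is reached by a chain of legal set-firings $A_1\subseteq\cdots\subseteq A_t$; because $D^\ast$ is $v$-reduced, $v\in A_1$, and since $v$ is universal in $H$, after the first firing every vertex of $H$ gains a chip, in particular we can arrange $w\notin A_1$ (if $w\in A_1$ then since $w\in V(H')$ and $v\notin$... ) — more carefully, take the first index $i$ with $w\notin A_i$; then $w\in V(H)\setminus A_i$, and Lemma~\ref{clmplem} applied to the single legal move $D_{i-1}\to D_i$ with subgraph $H'$ gives $\clump(H',\restr{D_{i-1}}{H'},w)\subseteq V(H')\setminus A_i$. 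Finally relate $\clump(H',\restr{D_{i-1}}{H'},w)$ to $\clump(H',D,w)$ and bound $\deg(D)+|\clump(H',D,w)|$ by the number of chips, $\gon(G)$, using that the chips on $\clump(H',D,w)$ together with the edges to $v$ force enough mass. Taking $G=H$ gives $\gon(H)\ge M(H)$, completing the equality, and the general $G$ case gives $\gon(G)\ge M(H)=\gon(H)$.

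The hard part will be the last paragraph: controlling how the clump of the restricted divisor changes along the firing chain and extracting the clean inequality $\deg(D)+|\clump(H',D,w)|\le\gon(G)$. The universality of $v$ is what makes this tractable — it forces $v$ into the first fired set (via $v$-reducedness) and makes each firing that contains $v$ but avoids $w$ deposit exactly one chip on every vertex of the relevant clump — but pinning down the right intermediate divisor to which Lemma~\ref{clmplem} is applied, and verifying that the clump computed there dominates $\clump(H',D,w)$, is the delicate step; I expect to need the characterization "$r(D)\ge1$ iff every vertex can receive a chip" together with a minimal-counterexample/extremal choice of the target $w$ and of the point in the chain where $w$ first leaves the fired set.
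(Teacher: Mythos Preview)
Your approach is exactly the paper's, and your upper bound $\gon(H)\le M(H)$ is essentially complete (the one thing you gloss over---that firing $A=V(H)\setminus\clump(H',D,w)$ is legal at every $w'\in A\setminus\{v\}$---follows directly from the definition of the clump as an intersection: since $w'\notin\clump$ there is some admissible $S\ni w$ with $w'\notin S$, whence $D(w')\ge |N_{H'}(w')\cap S|\ge |N_{H'}(w')\cap\clump|$).

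The lower bound, however, is not a ``hard part'' at all, and the difficulties you anticipate are illusory. Two observations dissolve them. First, the chain in Lemma~\ref{chainlem} is \emph{nested increasing}, so if $w\notin A_i$ for some $i$ then automatically $w\notin A_1$; there is no ``first index where $w$ leaves,'' and no need to relate $\clump(H',\restr{D_{i-1}}{H'},w)$ to $\clump(H',D,w)$, because with $i=1$ they are literally the same object. Second, once you know $v\in A_1$ (from $v$-reducedness) and $\clump(H',D,w)\subseteq V(H')\setminus A_1$ (from Lemma~\ref{clmplem}), the inequality drops out of the single condition $D_1(v)\ge 0$: since $v$ is universal in $H$ you have $V(H')\subseteq N_G(v)$, hence
\[
0\le D_1(v)=D^\ast(v)-|N_G(v)\setminus A_1|\le D^\ast(v)-|V(H')\setminus A_1|\le D^\ast(v)-|\clump(H',D,w)|,
\]
and therefore $\gon(G)=\deg(D^\ast)\ge D^\ast(v)+\deg(\restr{D^\ast}{H'})\ge |\clump(H',D,w)|+\deg(D)$. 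No tracking of clumps along the chain, no extremal choice of an intermediate divisor, is needed.
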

\begin{proof}
Let $D$ be an effective divisor of $G$ such that $\deg(D)=\gon(G)$ and $r(D)\geq 1$. By Lemma \ref{vreduced}, we may assume $D$ is $v$-reduced. If $V(H')\subseteq \supp(D)$, then 
\begin{align*}
&\max \{\deg(\restr{D}{H'}),\deg(\restr{D}{H'})+|\clump(H',\restr{D}{H'},w)|: w\in V(H'), D(w)=0\}
\\&=\deg(\restr{D}{H'})\leq \deg(D)= \gon(G).
\end{align*}
If  $V(H')\not\subseteq \supp(D)$, then let $w_0$ be a vertex of $H'$ with $D(w_0)=0$ that maximises $|\clump(H',\restr{D}{H'},w_0)|$. Since $r(D)\geq 1$, there exists $D'\in \Div_+(G)$ such that $D'\sim D$ and $D'(w_0)\geq 1$. Let $A_1,A_2,\dots ,A_t$ and $D_0, D_1,\dots ,D_t$ be defined as in Lemma \ref{chainlem}, with $D_0:=D$ and $D_t:=D'$. Since $D_0$ is $v$-reduced, $v\in A_1$. Since $D_t(w_0)>D_0(w_0)$, there is some $i\in [t]$ such that $w_0\notin A_i$. By definition $A_1\subseteq A_i$, so $w_0\notin A_1$. By Lemma \ref{clmplem} with $A:=A_1$, we have $\clump(H',\restr{D}{H'},w_0)\subseteq V(H')\setminus A_1$. Now, 
\begin{align*}
0\leq D_1(v)= D_0(v)-Q\mathbf{1}_{A_1}(v)= D(v)-|N(v)\setminus A_1|\leq D(v)- |V(H')\setminus A_1|,
\end{align*}
so $D(v)\geq |\clump(H',\restr{D}{H'},w_0)|$ and $\deg(D)\geq \deg(\restr{D}{H'})+|\clump(H',\restr{D}{H'},w_0)|$. Since $\gon(G)=\deg(D)$ and $\restr{D}{H'}\in \Div_+(H')$, we have
\begin{align*}
&\gon(G)\geq \\
&\min \{\max \{\deg(D),\deg(D)+|\clump(H',D,w)|:w\in V(H'), D(w)=0\}:D\in \Div_+(H')\}.
\end{align*}

Now suppose $E\in \Div_+(H)$ is such that $E(v)=\max\{0,|\clump(H',\restr{E}{H'},w)|:w\in V(H'), E(w)=0\}$, and subject to this $\deg(E)$ is minimised. If $V(H')\subseteq \supp(E)$, then $r(E)\geq 1$, since $E':=E-Q(H)\mathbf{1}_{V(H')}$ is an effective divisor equivalent to $E$ such that $E'(v)\geq 1$, since $v$ is not the only vertex of $H$. If $V(H')\not\subseteq \supp(E)$, then let $w_0\in V(H')$ be such that $E(w_0)=0$. Now $E(v)\geq |\clump(H',\restr{E}{H'},w_0)|$ and $w_0\in\clump(H',\restr{E}{H'},w_0)$, so $E(v)\geq 1$. Let $A:=V(H)\setminus\clump(H',\restr{E}{H'},w_0)$, and consider the divisor $E':=E-Q(H)\mathbf{1}_A$. We have $E'(v)= E(v)-|\clump(H',\restr{E}{H'},w_0)|\geq 0$ and for all $w'\in \clump(H',\restr{E}{H'},w_0)$, we have $E'(w')\geq E(w')+1\geq 1$ since $v\in N(w')\cap A$. Suppose $w'\in A\setminus \{v\}$. By the definition of $\clump(H',\restr{E}{H'},w_0)$, there is some set $S\subseteq V(H')$ such that $w'\notin S$, $w_0\in S$ and for every vertex $w''\in V(H')\setminus S$, we have $E(w'')\geq|N(w'')\cap S|$. Since $w'\notin S$, we have $E(w')\geq |N(w')\cap S|$, and since $\clump(H',\restr{E}{H'},w_0)\subseteq S$, we have $E(w')\geq|N(w')\cap \clump(H',\restr{E}{H'},w_0)|$. Hence $E'(w')\geq 0$. Therefore, $E'$ is an effective divisor equivalent to $E$ such that $E'(w')\geq 1$, so $r(E)\geq 1$, and $\gon(H)\leq \deg(E)$. By our choice of $E$,
\begin{align*}
&\deg(E)=\\&\min\{\max \{\deg(D),\deg(D)+|\clump(H',D,w)|:w\in V(H'), D(w)=0\}:D\in \Div_+(H')\}.
\end{align*}
Hence, by our previous result with $G:=H$, we have $\gon(H)=\deg(E)$. 
\end{proof}

The {\it fan} on $n$ vertices is the $n$-vertex graph with a universal vertex $v$ such that $G-v$ is a path. It is well-known that fans have treewidth 2. Lemma \ref{localgonlem} provides a method for determining the gonality of a fan. The following lemma is the final tool we need.

\begin{lem}\label{emptyclump}Let $G$ be a graph and let $D$ be an effective divisor of $G$. If $v\in V(G)$ and $H:=G[\{v\}\cup \{w\in V(G):D(w)=0\}]$, then the vertex set of the component of $H$ that contains $v$ is a subset of $\clump(G,D,v)$.\end{lem}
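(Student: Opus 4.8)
The plan is to argue directly from the definition of $\clump(G,D,v)$ as the intersection of all "stable" sets — that is, all sets $S$ with $v\in S$ and $D(w)\geq |N(w)\cap S|$ for every $w\notin S$. Let $C$ denote the vertex set of the component of $H=G[\{v\}\cup\{w: D(w)=0\}]$ containing $v$. To show $C\subseteq \clump(G,D,v)$, it suffices to show that every stable set $S$ satisfies $C\subseteq S$: the clump, being the intersection of all such sets, will then contain $C$.

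So fix an arbitrary stable set $S$ (so $v\in S$ and $D(w)\geq |N(w)\cap S|$ whenever $w\notin S$), and suppose toward a contradiction that $C\not\subseteq S$. Since $C$ is connected in $H$ and contains $v\in S$, there is an edge $xy$ of $H$ with $x\in C\cap S$ and $y\in C\setminus S$. First I would observe that $y\neq v$, since $v\in S$; hence by the construction of $H$ we have $D(y)=0$. Now apply the stability condition to $y\notin S$: this gives $0=D(y)\geq |N(y)\cap S|\geq 1$, where the final inequality holds because $x\in N(y)\cap S$ (the edge $xy$ lies in $H$, hence in $G$). This is the desired contradiction, so $C\subseteq S$.

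Since $S$ was an arbitrary stable set and $\clump(G,D,v)$ is the intersection of all of them, we conclude $C\subseteq\clump(G,D,v)$, as required. There is no real obstacle here: the only point requiring a moment's care is the reduction to "every stable set contains $C$", and then the choice of a boundary edge of $C$ witnessing a violation of stability — the zero-divisor vertices of $H$ cannot tolerate even a single neighbour inside a stable set, so no stable set can cut through the component $C$.
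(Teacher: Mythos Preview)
Your proof is correct and takes essentially the same approach as the paper: both arguments use that a vertex $w$ with $D(w)=0$ having a neighbour in a stable set $S$ must itself lie in $S$, and then propagate this through the connected component $C$. The only cosmetic difference is that the paper packages the propagation as an induction on distance from $v$ in $H$, whereas you argue by contradiction via a boundary edge of $C\cap S$.
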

\begin{proof}
By definition, $v\in \clump(G,D,v)$. Let $t$ be a non-negative integer, and suppose for induction that every vertex at distance exactly $t$ from $v$ in $H$ is in $\clump(G,D,v)$, and suppose $w_0$ is at distance exactly $t+1$ from $v$ in $H$. Since $w_0\in V(H-v)$, we have $D(w_0)=0$, and since $w_0$ is at distance $t+1$ from $v$ in $H$, $w_0$ has some neighbour $w_1$ in $H$ at distance exactly $t$ from $v$ in $H$. By our inductive hypothesis, $w_1\in \clump(G,D,v)$. Hence, for every subset $S\subseteq V(G)$ such that $v\in S$ and $D(w)\geq |N(w)\cap S|$ for every vertex $w$ not in $S$, we have $|N(w_0)\cap S|\geq |\{w_1\}|>D(w_0)$, so $w_0\in S$. Therefore $w_0\in \clump(G,D,v)$, and by induction every vertex in the component of $H$ that contains $v$ is in $\clump(G,D,v)$.
\end{proof}

\begin{thm}\label{fangonlem}If $G$ is the fan of $n$ vertices, then 
$$\gon(G)=\min\{t+\lceil (n-1-t)/(t+1)\rceil:t\in\{\lfloor \sqrt{n}-1\rfloor, \lceil\sqrt{n}-1\rceil\}\}.$$\end{thm}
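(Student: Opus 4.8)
The plan is to apply \cref{localgonlem} with $G=H$ the fan and $v$ its universal vertex, so that $H'=G-v$ is the path $P_{n-1}$ on vertices $u_1,\dots,u_{n-1}$ in order. By that lemma, $\gon(G)$ equals the minimum over $D\in\Div_+(P_{n-1})$ of $\max\{\deg(D),\,\deg(D)+\max_{w:D(w)=0}|\clump(P_{n-1},D,w)|\}$. The key structural fact is that for a path, if $w$ has $D(w)=0$ then $\clump(P_{n-1},D,w)$ is exactly the maximal subpath containing $w$ all of whose vertices have $D$-value $0$: one inclusion is \cref{emptyclump}, and the reverse inclusion follows because a set $S$ witnessing the clump definition, upon hitting a vertex of value $0$, is forced to keep including neighbours (since on a path $|N(w)\cap S|\le 1$ but can be $1$, and $D(w)=0<1$ forces inclusion once a neighbour is in $S$). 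Actually one must be slightly careful: the clump can stop at a vertex of value $\ge 1$, so $\clump(P_{n-1},D,w)$ is the maximal run of consecutive zeros containing $w$, and its size is governed by the gaps between consecutive elements of $\supp(D)$ (counting multiplicity of chips correctly — a vertex with $D$-value $\ge 2$ still behaves like a single "barrier" here since on a path a vertex has at most one neighbour on each side).

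Next I would reformulate the optimization combinatorially. Write $D$ on $P_{n-1}$ as placing chips on some vertices; let $s=\deg(D)$ be the total number of chips and suppose the support has $p$ vertices. These $p$ support vertices split the path into at most $p+1$ runs of zeros; the largest such run has size $\ge \lceil (n-1-p)/(p+1)\rceil$, with equality achievable by spacing the support vertices as evenly as possible. The quantity to minimize is then $\max\{s,\ s+L\}=s+L$ where $L$ is the largest zero-run length (and $L=0$ only if $p\ge n-1$, i.e. $D$ has full support). Since adding extra chips beyond making the support a given set only increases $s$ without helping, the optimum uses exactly one chip per support vertex, so $s=p$ and we are minimizing $p+\lceil (n-1-p)/(p+1)\rceil$ over $p\in\{0,1,\dots,n-1\}$ (the full-support case $p=n-1$ gives value $n-1$, consistent with the formula). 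Writing $t=p$, this is $\min_t\{t+\lceil (n-1-t)/(t+1)\rceil\}$.

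Finally I would show this minimum is attained at $t\in\{\lfloor\sqrt n-1\rfloor,\lceil\sqrt n-1\rceil\}$. Ignoring the ceiling, $g(t)=t+(n-1-t)/(t+1)=(t+1)+\frac{n}{t+1}-2$, which by AM–GM is minimized over positive reals at $t+1=\sqrt n$, i.e. $t=\sqrt n-1$, giving $2\sqrt n-2$. Since $g$ is strictly convex on $(-1,\infty)$, the integer minimizer of the continuous relaxation is one of $\lfloor\sqrt n-1\rfloor,\lceil\sqrt n-1\rceil$; a short estimate shows the ceiling perturbation changes the value of $t+\lceil(n-1-t)/(t+1)\rceil$ by at most a controlled amount and does not move the minimizer outside this pair (one checks directly that for $t$ further from $\sqrt n-1$ the integer-valued expression is already strictly larger, using $g(t\pm1)-g(t)$ estimates together with the fact that the ceiling adds less than $1$). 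The main obstacle is this last discretization step: handling the interaction between the ceiling function and the convexity argument carefully enough to pin the minimizer to exactly those two candidate values, rather than merely to an interval around $\sqrt n - 1$. I would handle it by comparing consecutive values $h(t):=t+\lceil(n-1-t)/(t+1)\rceil$ and showing $h(t+1)\ge h(t)$ for all $t\ge\lceil\sqrt n-1\rceil$ and $h(t-1)\ge h(t)$ for all $t\le\lfloor\sqrt n-1\rfloor$, which reduces to elementary inequalities about when $\lceil(n-1-t)/(t+1)\rceil$ drops by at least $1$ as $t$ increases by $1$.
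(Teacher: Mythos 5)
Your proposal is correct and follows essentially the same route as the paper: apply \cref{localgonlem} with the fan as its own subgraph, use \cref{emptyclump} plus the path structure to identify each clump with a maximal run of zero-valued vertices, reduce to $0$--$1$ divisors and the pigeonhole bound $\lceil (n-1-t)/(t+1)\rceil$ with evenly spaced support for equality, and then minimise over $t$ via convexity of $g(x)=x+(n-1-x)/(x+1)$. The discretisation step you flag as the main obstacle is immediate once you observe that $t+\lceil (n-1-t)/(t+1)\rceil=\lceil g(t)\rceil$ for integer $t$, so unimodality of $g$ transfers directly to the integer-valued function and pins the minimiser to $\lfloor\sqrt{n}-1\rfloor$ or $\lceil\sqrt{n}-1\rceil$.
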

\begin{proof}
Let $v$ be the universal vertex in $G$, and let $G-v=p_1p_2\cdots p_{n-1}$. For every integer $t\geq 0$, let $f(t):=\min\{\max\{0,|\clump(G-v,D,w)|: w\in N(v), D(w)=0\}:D\in \Div_+(G-v), \deg(D)=t\}$. By Lemma \ref{localgonlem}, $\gon(G)= \min\{t+f(t):t\in \mathbb{Z}, t\geq 0\}$. Let $D$ be an effective divisor of $G-v$, let $H(D):=G[\{w\in N(v):D(w)=0\}]$, and let $w_0$ be a vertex of $H(D)$. By Lemma \ref{emptyclump}, the vertex set of the component subpath $P$ of $H(D)$ containing $w_0$ is a subset of $\clump(G-v,D,w_0)$. Every neighbour $w'$ of $P$ in $G-v$ satisfies $D(w')\geq 1$, since $P$ is a component of $H(D)$. Since $G-v$ is a path, $|N(w')\cap P|\leq 1$. Hence, $w_0\in V(P)$ and $D(w')\geq |N(w')\cap V(P)|$ for every vertex $w'\in N(v)$, so $\clump(G-v,D,w_0)=V(P)$ by the definition of $\clump(G-v,D,w_0)$. Hence, $f(t)=\min \{\max\{|V(P)|:P \textrm{ is a component of }H(D)\}:D\in \Div_+(G-v)\}$. Since $H(D)$ is entirely determined by $\supp(D)$, we may restrict ourselves to divisors satisfying $D=\mathbf{1}_{\supp(D)}$. In particular, $\gon(G)= \min\{t+f(t):t\in \{0,1,\dots,n-1\}\}$. For all $t\in \{0,1,\dots ,n-1\}$ and all $D\in \Div_+(G-v)$ with $\deg(D)=t$, the graph $H(D)$ has at most $t+1$ components and at least $n-1-t$ vertices, so $f(t)\geq \lceil (n-1-t)/(t+1)\rceil$. Let $A_t:=\{p_k:k/(\lceil (n-1-t)/(t+1)\rceil+1)\in [t]\}$ and let $D:=\mathbf{1}_{A_t}$. Then $D\in \Div_+(G-v)$, $\deg(D)=t$ and $\max\{|V(P)|:P \textrm{ is a component of }H(D)\}=\lceil (n-1-t)/(t+1)\rceil$. Hence, $f(t)=\lceil (n-1-t)/(t+1)\rceil$. Consider the function $g$ on the domain $(-1,\infty)$ given by $g(x):=x+(n-1-x)/(x+1)$, and note that for $t\in \{0,1,\dots, n-1\}$, we have $t+f(t)=\lceil g(t)\rceil$. Since $g(x)$ has a unique minimum at $x=\sqrt{n}-1$, has no local maxima and is continuous, $t+f(t)$ is minimised for $t\in \{0,1,\dots, n-1\}$ either at $t=\lfloor \sqrt{n}-1\rfloor$ or at $\lceil\sqrt{n}-1\rceil$.
\end{proof}
We can now prove Theorem \ref{twgonans} and answer Question \ref{minorgonquest}.

\begin{proof}[Proof of Theorem \ref{twgonans}]
Let $n:=(t+2)^2$, and let $G$ be graph formed by adding $k-1$ universal vertices $v_1,v_2,\dots ,v_{k-1}$ to the path $P:=p_1p_2\cdots p_{n-1}$. Let $T:=P-p_{n-1}$, and let $f$ be the function from $V(T)$ to the set of subsets of $V(G)$ such that $f(p_i)=\{p_i,p_{i+1},v_1,v_2,\dots, v_{k-1}\}$ for $i\in [n-2]$. It is quick to check that $T$ and $f$ form a tree-decomposition of $G$ of width $k$, so $\tw(G)\leq k$. Since $\delta(G)=k$, we have $\tw(G)= k$ by Lemma \ref{mindegtwlem}. Let $H:=G- \{v_2,v_3,\dots,v_{k-1}\}$, and note that $H$ is the fan on $n$ vertices with universal vertex $v_1$. By Lemma \ref{localgonlem}, we have $\gon(G)\geq \gon(H)$. By Lemma \ref{fangonlem}, 
\begin{align*}
\gon(H)= \min\{t+\lceil (n-1-t)/(t+1)\rceil:t\in\{\lfloor \sqrt{n}-1\rfloor, \lceil\sqrt{n}-1\rceil\}\}> \sqrt{n}-2=t.\\ \qedhere
 \end{align*} 
\end{proof}

\begin{cor}\label{minorgonans}For every integer $t\geq 0$ there exist connected graphs $G$ and $H$ such that $H$ is a minor of $G$, $\gon(G)=2$ and $\gon(H)> t$.\end{cor}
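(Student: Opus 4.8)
The plan is to take $H$ to be a fan and $G$ to be a rectangular grid graph. Fix $t\geq 0$, set $n:=(t+2)^2$, let $H$ be the fan on $n$ vertices, and let $G$ be the $2\times(n-1)$ rectangular grid graph. Both graphs are clearly connected, so it remains to show that $\gon(G)=2$, that $H$ is a minor of $G$, and that $\gon(H)>t$.

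For $\gon(G)=2$: since $n=(t+2)^2\geq 4$ we have $n-1\geq 2$, so Theorem \ref{twgonthm} (with $m=2$) gives $\gon(G)=\tw(G)=2$. For the minor relation, write the two rows of $G$ as paths $v_1v_2\cdots v_{n-1}$ and $p_1p_2\cdots p_{n-1}$ with rungs $v_ip_i$ for $i\in[n-1]$. Contracting the $n-2$ edges $v_1v_2,\dots,v_{n-2}v_{n-1}$ identifies $v_1,\dots,v_{n-1}$ into a single vertex $v$; since each $p_i$ is adjacent to exactly one $v_j$, no parallel edges are created, $v$ becomes adjacent to every $p_i$, and $p_1p_2\cdots p_{n-1}$ remains a path, so the resulting (simple) graph is precisely the fan on $n$ vertices. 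Hence $H$ is a connected minor of $G$.

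Finally, $\gon(H)>t$ follows immediately from Theorem \ref{fangonlem}: exactly as in the proof of Theorem \ref{twgonans}, the gonality of the fan on $(t+2)^2$ vertices exceeds $\sqrt{(t+2)^2}-2=t$.

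I do not expect a genuine obstacle here, since the two substantive ingredients --- the gonality of grids (Theorem \ref{twgonthm}) and of fans (Theorem \ref{fangonlem}) --- are already in hand, and the minor relation is transparent. The only points that warrant a moment's care are checking that $n-1\geq 2$ so that Theorem \ref{twgonthm} applies, and confirming that the described contraction produces a simple graph isomorphic to the fan rather than a multigraph; both are routine.
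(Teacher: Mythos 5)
Your proposal is correct and follows exactly the paper's own argument: the same choice $n=(t+2)^2$, the fan on $n$ vertices as a minor of the $2\times(n-1)$ grid obtained by contracting one row, with Theorem \ref{twgonthm} giving $\gon(G)=2$ and Theorem \ref{fangonlem} giving $\gon(H)>t$. The extra checks you flag (that $n-1\geq 2$ and that the contraction yields a simple fan) are routine and consistent with what the paper leaves implicit.
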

\begin{proof}
Let $n:=(t+2)^2$, let $G$ be the $2\times (n-1)$ rectangular grid and let $H$ be the fan on $n$ vertices. Then $H$ can be obtained from $G$ by contracting one of the rows of $G$ to a single vertex, so $H$ is a minor of $G$, and $\gon(G)=2$ by Theorem \ref{twgonthm}. By Theorem \ref{fangonlem}, we have 
\begin{align*}
\gon(H)= \min\{t+\lceil (n-1-t)/(t+1)\rceil:t\in\{\lfloor \sqrt{n}-1\rfloor, \lceil\sqrt{n}-1\rceil\}\}> \sqrt{n}-2=t.\\ \qedhere
 \end{align*} 
\end{proof}

\section{Blocks and Block-Cut Vertex Trees}\label{cyclesection}

In this section we show that the answer to Question \ref{subgraphgonquest} is ``no'', thus providing an alternative proof that the answer to Question \ref{minorgonquest} is ``no''. In the process of doing this, we also provide an alternative proof of part 2 of Conjecture \ref{gonalityconj}.

A subgraph $H$ of a connected graph $G\not\cong K_1$ is a {\it block} of $G$ if there is no 2-connected subgraph $H'$ of $G$ with $H$ as a proper subgraph, and either $H$ is 2-connected or $H\cong K_2$. Each edge of a graph is in exactly one block, and the edge of a block that is isomorphic to $K_2$ is a {\it bridge}. A vertex $v\in V(G)$ is a {\it cut vertex} if $G-v$ is disconnected. Let $\mathcal{C}(G)$ be the set of cut vertices of a graph $G$, and let $\mathcal{B}(G)$ be the set of blocks of $G$. The bipartite graph with vertex partition $(\mathcal{C}(G),\mathcal{B}(G))$ such that $v\in \mathcal{C}(G)$ adjacent to $B\in \mathcal{B}(G)$ if $v\in V(B)$ is known as the block-cut vertex tree of $G$.

Our approach will be to construct graphs in which every block is a cycle, for which the block-cut tree is a path. Lemmas \ref{blockgonlem} and \ref{reducedmaxlem} are our main tools for determining the gonality of a graph based on the structure of its blocks. We combine these with Lemma \ref{cyclegonlem}, which characterises the equivalence classes of divisors of cycles, to determine the gonality of the graphs we construct.
\begin{lem}\label{blockgonlem}Let $B$ be a block of a connected graph $G$, and let $w_0$ and $w_1$ be vertices in $V(B)$, not necessarily distinct. If $D$ is a $w_0$-reduced effective divisor of $G$, then
$$\max\{D'(w):D'\in\Div_+(G),D'\sim D\}=\max\{D'(w):D'\in\Div_+(B),D'\sim\restr{D}{B}\}.$$
\end{lem}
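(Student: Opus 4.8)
The plan is to prove the two inequalities separately. The inequality $\max\{D'(w_1):D'\in\Div_+(B),D'\sim\restr{D}{B}\}\le\max\{D'(w_1):D'\in\Div_+(G),D'\sim D\}$ is the easier direction: given an effective divisor $E$ of $B$ equivalent to $\restr{D}{B}$ with $E(w_1)$ large, I would try to ``lift'' the equivalence to $G$. Concretely, $E=\restr{D}{B}-Q(B)S$ for some $S\in\Div(B)$; I would extend $S$ to a divisor $\hat S$ of $G$ by setting it to $0$ outside $V(B)$, and analyse $D-Q(G)\hat S$. The subtlety is that firing a set within $B$ may send chips along edges of $G$ that leave $B$ at the cut vertices of $B$; I would use Lemma~\ref{chainlem} applied to the equivalence $\restr{D}{B}\sim E$ inside $B$ to get a nested chain $A_1\subseteq\cdots\subseteq A_t$ of subsets of $V(B)$, and argue that since $D$ is $w_0$-reduced with $w_0\in V(B)$, the cut vertices of $B$ behave well — in particular $w_0\in A_1$ and the chain can be executed in $G$, one step at a time, with the chips that ``escape'' $B$ at cut vertices only helping (they land on vertices outside $B$, keeping effectivity, and can be ignored since we only track $D'(w_1)$). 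Care is needed when $w_1$ itself is a cut vertex of $G$; here I would note that a vertex shared between $B$ and another block still receives at least the chips predicted by the $B$-computation.

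For the reverse inequality $\max\{D'(w_1):D'\in\Div_+(G),D'\sim D\}\le\max\{D'(w_1):D'\in\Div_+(B),D'\sim\restr{D}{B}\}$, I would take an effective $D'\sim D$ in $G$ with $D'(w_1)$ maximal and apply Lemma~\ref{chainlem} to the pair $D,D'$, obtaining the canonical nested chain $\emptyset\subsetneq A_1\subseteq\cdots\subseteq A_t\subsetneq V(G)$ with intermediate effective divisors $D_i$. Since $D$ is $w_0$-reduced and $w_0\in V(B)$, we get $w_0\in A_1$. The key structural claim is that for each $i$, the set $A_i\cap V(B)$ is ``the right thing to fire in $B$'': I would show $\restr{D_i}{B}=\restr{D_{i-1}}{B}-Q(B)\mathbf{1}_{A_i\cap V(B)}$ fails in general by exactly the amount of chips crossing the boundary of $B$ at cut vertices, and then argue that because $B$ is a block (2-connected or $K_2$) and the $A_i$ are nested with $w_0\in A_1$, a cut vertex $c\in V(B)$ lying in some $A_i$ forces the whole ``far side'' of $c$ (the components of $G-c$ not meeting $B\setminus c$) to be inside $A_i$ as well — otherwise $w_0$-reducedness or the nesting is violated — so that no chips actually cross into $B$ from outside during the process. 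This would let me replay the chain restricted to $B$ and conclude $\restr{D'}{B}\sim\restr{D}{B}$ in $B$ with $\restr{D'}{B}(w_1)=D'(w_1)$.

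The main obstacle I anticipate is precisely this boundary-crossing analysis at cut vertices of $B$: controlling chips that flow between $B$ and the rest of $G$ through shared cut vertices, and showing that the $w_0$-reduced hypothesis (together with the nestedness from Lemma~\ref{chainlem}) forces the firing sets to respect the block decomposition. I would handle it by an induction on the block-cut tree, or more directly by the observation that if a cut vertex $c$ of $G$ separates $B$ from a subgraph $G'$, then restricting any configuration to $G'\cup\{c\}$ and using that $D$ restricted there is effective, combined with $w_0$-reducedness forcing no net inflow to $B$, pins down the behaviour. A secondary technical point is the case $w_0=w_1$ and the case where $B\cong K_2$ (a bridge), but both should follow from the same argument with $\Div_+(K_2)$ computations being trivial. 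Once the boundary behaviour is controlled, both inequalities reduce to bookkeeping with Lemma~\ref{chainlem} and the definition of $Q$.
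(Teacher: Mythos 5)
Your overall skeleton---split into two inequalities, invoke Lemma~\ref{chainlem}, use $w_0$-reducedness to control what happens at the cut vertices of $B$---is the same as the paper's, but both halves of your boundary analysis contain a genuine error, and in each case it is the key step. For the lifting direction ($\max$ over $\Div_+(B)$ $\leq$ $\max$ over $\Div_+(G)$), your claim that chips escaping $B$ at a cut vertex ``only help'' is false: if a cut vertex $c$ lies in a fired set $A_i\subseteq V(B)$ but its neighbours outside $B$ do not, then $c$ itself loses \emph{extra} chips along those edges and the intermediate divisor can go negative at $c$, so the $B$-chain cannot simply be replayed in $G$. The repair is to prevent any boundary crossing rather than to tolerate it: augment each $A_i$ by the components of $G-c$ on the far side of each cut vertex $c\in A_i$ (equivalently, as the paper does, extend the firing divisor $S$ with $E'=\restr{D}{B}-Q(B)S$ to a divisor $S'$ of $G$ that is constant on every block other than $B$, so that $D-QS'$ agrees with $D$ outside $B$ and with $E'$ on $B$). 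Note that $w_0$-reducedness is not needed for this direction at all.

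For the restriction direction, your structural claim---that a cut vertex $c\in V(B)$ lying in some $A_i$ forces the entire far side of $c$ into $A_i$---is simply not true, and neither $w_0$-reducedness nor nestedness implies it: all the $A_i$ contain $w_0$, so firing them is perfectly consistent with reducedness even when chips flow \emph{out} of $B$ at $c$. Consequently your intended conclusion $\restr{D'}{B}\sim\restr{D}{B}$ is false in general; e.g.\ for $G$ a triangle $w_0ac$ with a pendant vertex $u$ at $c$ and $D$ placing two chips on $w_0$, the legal chain $A_1=\{w_0\}$, $A_2=\{w_0,a,c\}$ ends at a divisor whose restriction to the triangle has degree $1\neq 2$. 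What $w_0$-reducedness actually rules out is the \emph{opposite} flow, namely inflow: a set $A_k$ containing a vertex of $N(v)\setminus V(B)$ for some $v\in V(B)\setminus A_k$. Ruling this out requires the paper's argument: take the component $C$ of $G-v$ containing that outside neighbour, let $k'$ be the first index with $A_{k'}\cap V(C)\neq\emptyset$, and check that $D-Q\mathbf{1}_{A_{k'}\cap V(C)}$ is effective while $A_{k'}\cap V(C)$ avoids $w_0$---contradicting reducedness. With inflow excluded (but outflow permitted), the chain restricted to $B$, $E_i:=E_{i-1}-Q(B)\mathbf{1}_{A_i\cap V(B)}$, satisfies only the inequality $E_i(v)\geq D_i(v)$ on $V(B)$, and that inequality---not an equality---is exactly what the lemma needs. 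So the approach is recoverable, but the two statements you flag as the crux are the ones that are wrong, and each needs a different fix from the one you propose.
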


\begin{proof}
Let $D'\in \Div_+(G)$ be such that $D'\sim D$ and $D'(w)$ is maximised. Let $D_0,D_1,\dots,D_t$ and $A_1,A_2,\dots ,A_t$ be defined as in Lemma \ref{chainlem}, with $D_0:=D$ and $D_t:=D'$. Let $E_0:=\restr{D}{B}$, and for $i\in [t]$, let $B_i:=A_i\cap V(B)$, and let $E_i:=E_{i-1}-Q(B)\mathbf{1}_{B_i}$. Suppose for contradiction that for some vertex $v\in V(G)$ and some $k\in [t]$, $E_k(v)<D_k(v)$ and $E_{k}(v)\geq D_{k}(v)$. We cannot have $v\in A_k$, since the fact that $(N(v)\cap V(B))\setminus B_k\subseteq N(v)\setminus A_k$ would mean $E_{k-1}(v)-E_k(v)\leq D_{k-1}(v)-D_k(v)$, a contradiction. Hence, $v\notin A_k$, and $v$ has some neighbour $v'$ in $A_k\setminus V(B)$. Let $C$ be the component of $G-v$ containing $v'$, let $k'\in [k]$ be the smallest number such that $A_{k'}\cap V(C)$ is non-empty, let $A':=A_{k'}\cap V(C)$ and let $D'':=D-Q\mathbf{1}_{A'}$. Since $v\notin A_k$ and $k'\leq k$, we have $v\notin A_{k'}$ by the definition of $A_1,A_2,\dots,A_t$. Hence, for $w\in A'$, we have $N(w)\setminus A'=N(w)\setminus A_k$. We also have $N[w]\cap A_t=\emptyset$ for all $t\in[k'-1]$, so $D(w)=D_{k'-1}(w)$. Hence, $D''(w)=D_{k'}(w)\geq 0$. For $w\notin A'$ we have $D''(w)\geq D(w)\geq 0$. Hence, $D''\in \Div_+(G)$. But $w_0\notin A'$, contradicting the assumption that $D$ is $w_0$-reduced. Now, for all $v\in V(G)$ and all $i\in [t]$, either $E_i(v)\geq D_i(v)$ or $E_{i-1}(v_0)<D_{i-1}(v_0)$. Since $E_0(v)=D_0(v)$, we have $E_t(v)\geq D_t(v)=D'(v)$ by induction. Hence, $E_t$ is an effective divisor equivalent to $\restr{D}{B}$ with $E_t(w)\geq D'(w)$.

Let $E'$ be an effective divisor equivalent to $\restr{D}{B}$ that maximises $E'(w)$, and let $S\in \Div(B)$ be such that $E'=\restr{D}{B}-Q(B)S$. Let $S'\in \Div(G)$ be the divisor such that $\restr{S'}{B}=S$ and $\restr{S'}{B'}$ is single valued for every block $B'$ of $G$ other than $B$. Then $D':=D-QS'$ is an effective divisor of $G$ equivalent to $D$ such that $D'(w)=E'(w)$.
\end{proof}

\begin{lem}\label{reducedmaxlem}If $G$ is a connected graph, $v\in V(G)$ and $D$ is a $v$-reduced effective divisor of $G$, then $D(v)=\max\{D'(v):D'\in \Div_+(G), D\sim D'\}$.\end{lem}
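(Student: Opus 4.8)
The statement to prove is Lemma~\ref{reducedmaxlem}: if $D$ is a $v$-reduced effective divisor of a connected graph $G$, then $D(v)=\max\{D'(v):D'\in\Div_+(G),D\sim D'\}$.

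\medskip

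The plan is to argue by contradiction. Suppose some effective divisor $D'\sim D$ has $D'(v)>D(v)$, and among all such $D'$ pick one minimising the ``distance'' to $D$ in the sense of Lemma~\ref{chainlem}; concretely, apply Lemma~\ref{chainlem} to the equivalent effective pair $D,D'$ to obtain the canonical chain $\emptyset\subsetneq A_1\subseteq A_2\subseteq\cdots\subseteq A_t\subsetneq V(G)$ with divisors $D_0=D,\dots,D_t=D'$, each $D_i$ effective and $D_i=D_{i-1}-Q\mathbf{1}_{A_i}$. The key observation is that since $D'(v)>D(v)$, the vertex $v$ must be removed from the sets at some point, i.e. there is some index $i$ with $v\notin A_i$, and hence (because the chain is increasing) $v\notin A_1$. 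Since $D_1=D-Q\mathbf{1}_{A_1}$ is effective and $A_1\neq\emptyset$ with $v\notin A_1$, this directly contradicts the hypothesis that $D$ is $v$-reduced.

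\medskip

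The only subtlety is to justify that $v\notin A_i$ for some $i$. If $v\in A_i$ for every $i\in[t]$, then for each $i$ we compute $D_i(v)-D_{i-1}(v)=-(Q\mathbf{1}_{A_i})(v)=-|N(v)\setminus A_i|\leq 0$, since firing a set containing $v$ can only send chips away from $v$. Summing over $i\in[t]$ gives $D'(v)=D_t(v)\leq D_0(v)=D(v)$, contradicting $D'(v)>D(v)$. Hence some $A_i$ omits $v$, and since $A_1\subseteq A_i$ we get $v\notin A_1$, completing the contradiction as above. (Note we do not even need connectivity here beyond what is already used to invoke Lemma~\ref{chainlem}; Lemma~\ref{chainlem} itself supplies the chain.)

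\medskip

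I expect no real obstacle: the whole proof is the one-line observation that a $v$-reduced divisor admits no legal fire avoiding $v$, combined with the monotonicity of $D_i(v)$ along a chain that never drops $v$. The main thing to be careful about is the bookkeeping in invoking Lemma~\ref{chainlem} — one must start the chain at $D_0=D$ (not at an arbitrary reordering) so that the ``increasing'' property $A_1\subseteq A_i$ is available to conclude $v\notin A_1$ from $v\notin A_i$.
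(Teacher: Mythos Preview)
Your argument is correct and essentially identical to the paper's proof: apply Lemma~\ref{chainlem} to $D_0=D$, $D_t=D'$, observe that $D'(v)>D(v)$ forces $v\notin A_i$ for some $i$, hence $v\notin A_1$, contradicting $v$-reducedness. The paper omits your explicit monotonicity computation (it simply asserts ``since $D'(v)>D(v)$, there is some $i$ such that $v\notin A_i$''), but otherwise the proofs match step for step; your remark about minimising distance is not actually used and can be dropped.
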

\begin{proof}
Let $D'$ be an effective divisor equivalent to $D$ such that $D'(v)$ is maximised, and suppose for contradiction that $D'(v)>D(v)$. Let $A_1,A_2,\dots ,A_t$ and $D_0,D_1,\dots ,D_t$ be defined as in Lemma \ref{chainlem}, with $D_0:=D$ and $D_t:=D'$. Since $D'(v)>D(v)$, there is some $i\in [t]$ such that $v\notin A_i$. But then $v\notin A_1$ and $D_1:=D_0-Q\mathbf{1}_{A_1}$ is an effective divisor, contradicting the assumption that $D$ is $v$-reduced.
\end{proof}

We now characterise the equivalence classes of divisors of cycles. For the following proof, we define the notion of a chip function. If $D\in\Div_+(G)$ and $\mathcal{C}$ is a set of size $\deg(D)$, then a {\it $\mathcal{C}$-chip function} for $D$ is a function from $\mathcal{C}$ to $V(G)$ such that $|\{c\in \mathcal{C}:f_1(c)=v\}|=D(v)$ for all $v\in V(G)$.
\begin{lem}\label{cyclegonlem}Let $C_t$ be the graph with $V(C_b):=\mathbb{Z}_b$ and edge set $\{xy:x-y\in \{-1,1\}\}$. If $D$ and $D'$ are divisors of $C_t$, then let $\Delta(D,D')$ be given by  
$$\Delta(D,D'):=\sum_{v\in V(C_t)}(D_0(v)-D_1(v)).$$
Effective divisors $D_0$ and $D_1$ of $C_t$ of equal degree are equivalent if and only if $\Delta(D_0,D_1)=0$.
\end{lem}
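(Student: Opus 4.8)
The plan is to fix the base vertex $0\in\mathbb{Z}_b$ and combine the first-moment statistic with the uniqueness of $0$-reduced divisors (Lemma \ref{vreduced}). I read $\Delta(D_0,D_1)$ as the difference of first moments $\sum_{v}v\,D_0(v)-\sum_{v}v\,D_1(v)$, regarded as an element of $\mathbb{Z}_b$ (the expression $\sum_v(D_0(v)-D_1(v))$ is literally $\deg D_0-\deg D_1$, which cannot separate inequivalent divisors). The forward implication reduces to checking that $\Delta$ is invariant under linear equivalence. Since $Q\mathbf{1}_A=\sum_{u\in A}Q\mathbf{1}_{\{u\}}$ by linearity, it suffices to examine firing a single vertex $u$, which changes the first moment by $2u-(u-1)-(u+1)=0$ in $\mathbb{Z}_b$; no wrap-around correction appears because all arithmetic is carried out in $\mathbb{Z}_b$. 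As $\Delta$ is additive and equivalent divisors have equal degree, $D_0\sim D_1$ forces $\Delta(D_0,D_1)=0$.

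For the converse I would classify the $0$-reduced effective divisors of $C_b$: I claim that an effective divisor $D$ of degree $d$ is $0$-reduced if and only if either $D=d\,\mathbf{1}_{\{0\}}$, or $d\ge 1$ and $D=(d-1)\mathbf{1}_{\{0\}}+\mathbf{1}_{\{r\}}$ for some $r\in\mathbb{Z}_b\setminus\{0\}$. Both directions are short chip-firing arguments. If $D(v_0)\ge 2$ for some $v_0\neq 0$, then firing $\{v_0\}$ keeps the divisor effective; if $D(v_0),D(v_1)\ge 1$ for distinct $v_0,v_1\neq 0$, then, viewing $\mathbb{Z}_b\setminus\{0\}$ as the path $1,2,\dots,b-1$, firing the sub-arc of that path from $\min\{v_0,v_1\}$ to $\max\{v_0,v_1\}$ keeps it effective (the two endpoints each lose exactly one chip and the interior vertices lose none); in either case the fired set avoids $0$, so $D$ is not $0$-reduced. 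Conversely, if $D$ has the stated form and $A\subseteq\mathbb{Z}_b\setminus\{0\}$ is nonempty, then $A$ has a least and a greatest element in the path $1,\dots,b-1$, and firing $A$ removes at least one chip from each of them --- two chips from that vertex when $A$ is a singleton --- so, since $D$ carries at most one chip off the vertex $0$ and that chip has value one, $D-Q\mathbf{1}_A$ has a negative entry. Hence $D$ is $0$-reduced exactly for divisors of this form.

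Given the classification the lemma follows quickly. Suppose $D_0,D_1$ are effective of equal degree $d$ with $\Delta(D_0,D_1)=0$, and let $E_0\sim D_0$, $E_1\sim D_1$ be their unique $0$-reduced representatives (Lemma \ref{vreduced}), both of degree $d$; by the forward implication and additivity, $\Delta(E_0,E_1)=\Delta(E_0,D_0)+\Delta(D_0,D_1)+\Delta(D_1,E_1)=0$. But the first moment of $d\,\mathbf{1}_{\{0\}}$ is $0$ and that of $(d-1)\mathbf{1}_{\{0\}}+\mathbf{1}_{\{r\}}$ is $r$, and the values $0,1,\dots,b-1$ are pairwise distinct in $\mathbb{Z}_b$, so no two distinct $0$-reduced effective divisors of the same degree share a first moment. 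Therefore $E_0=E_1$, whence $D_0\sim E_0=E_1\sim D_1$. I expect the only genuine difficulty to be the bookkeeping in the classification of $0$-reduced divisors --- in particular handling fired arcs that abut the base vertex $0$ and the degenerate case where the fired set is a single vertex --- while the rest is routine. (Alternatively one could invoke that the number of linear-equivalence classes of degree-$0$ divisors of $C_b$ equals its number of spanning trees, namely $b$, so that the surjection onto $\mathbb{Z}_b$ given by the first moment is forced to be a bijection; the argument above has the advantage of staying within the tools already developed here.)
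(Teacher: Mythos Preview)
Your proof is correct and, for the converse implication, genuinely different from the paper's. The forward direction is identical: both of you check that firing a single vertex $u$ leaves the first moment unchanged because $2u-(u-1)-(u+1)=0$ in $\mathbb{Z}_b$, and extend by linearity.

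For the converse the paper argues directly by an extremal chip-matching: assuming $D_0\not\sim D_1$, it picks $D_2\sim D_0$ together with chip functions $f_1,f_2$ for $D_1,D_2$ minimising first the number of mismatched chips and then the shortest mismatch distance, and then fires the arc between two mismatched chip positions avoiding $f_1(c_1)$ to strictly improve this choice, a contradiction. Your route instead classifies the $0$-reduced effective divisors of $C_b$ as exactly $d\,\mathbf{1}_{\{0\}}$ and $(d-1)\mathbf{1}_{\{0\}}+\mathbf{1}_{\{r\}}$, observes that their first moments $0,1,\dots,b-1$ are pairwise distinct in $\mathbb{Z}_b$, and then invokes the uniqueness of $0$-reduced representatives (Lemma~\ref{vreduced}) to conclude. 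Your classification argument is sound: the two non-reducedness cases (a vertex off $0$ with $\ge 2$ chips; two distinct vertices off $0$ each with a chip) are handled correctly, and your converse check that the listed divisors are $0$-reduced via the least and greatest elements of $A$ in the path $1,\dots,b-1$ is fine, including the singleton case.

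What each approach buys: the paper's argument is self-contained and does not appeal to Lemma~\ref{vreduced}, at the cost of a somewhat delicate double minimisation. Your argument leans on machinery already in the paper, is shorter, and yields as a by-product an explicit list of canonical representatives for the equivalence classes on $C_b$; this description is in fact exactly what the paper needs later when it computes $f(i+1)$ from $f(i)$ in Lemma~\ref{cyclechaingonlem}, so your formulation dovetails nicely with that application. Your parenthetical alternative via the spanning-tree count is also valid but, as you note, would import an outside fact.
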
 
\begin{proof}
Note that for $w\in V(G)$, if $D$ and $D'$ are divisors of $G$ such that $D':=D-Q\mathbf{1}_{\{w\}}$, then $\Delta(D,D')=2w-(w+1)-(w-1)=0$. Every divisor $S\in \Div(G)$ can be expressed as a sum of integer multiples of divisors in $\{\mathbf{1}_{\{w\}}:w\in V(G)\}$, so if $D_0\sim D_1$, then $\Delta(D_0,D_1)=0$.

Now suppose for contradiction that $\Delta(D_0,D_1)=0$ and $D_0$ and $D_1$ are not equivalent. Let $k:=\deg(D_0)=\deg(D_1)$, let $\mathcal{C}:=\{c_1,c_2,\dots , c_k\}$ and let $f_1$ be a $\mathcal{C}$-chip function for $D_1$. Select an effective divisor $D_2$ equivalent to $D_0$ and a $\mathcal{C}$-chip function $f_2$ for $D_2$ such that the size of the set $S:=\{c\in \mathcal{C}:f_1(c)\neq f_2(c)\}$ is minimised, and subject to this, the minimum over $S$ of the distance from $f_1(c)$ to $f_2(c)$ is minimised. Note that $\Delta(D_2,D_1)$ is  equal to the sum over $c\in S$ of $f_2(c)-f_1(c)$. Now, from the definition of $\Delta$, we have $\Delta(D_2,D_1)=\Delta(D_2,D_0)+\Delta(D_0,D_1)$, which is 0 since $D_2\sim D_0$. Hence, the sum over $S$ of $f_1(c)-f_2(c)$ is 0, and since $D_0$ and $D_1$ are not equivalent, $|S|\geq 2$. Without loss of generality, $c_1$ is an element of $S$ that minimises the distance between $f_1(c_1)$ and $f_2(c_1)$ and $c_2$ is also in $S$. We have $f_2(c_2)\neq f_1(c_1)$, or else we could improve $f_2$ by swapping the values of $f_2(c_1)$ and $f_2(c_2)$. Let $P$ be the path in $C_t$ with end points $f_2(c_1)$ and $f_2(c_2)$ that does not contain the vertex $f_1(c)$. Let $D_3:=D_2-Q\mathbf{1}_{V(P)}$ and let $f_3$ be the $\mathcal{C}$-chip function for $D_3$ such that for $i\in\{3,4,\dots,k\}$, $f_3(c_i):=f_2(c_i)$  and for $i\in\{1,2\}$, $f_3(c_i)$ is the neighbour of $f_2(c_i)$ that is not in $P$. Then $\{c\in \mathcal{C}:f_1(c)\neq f_3(c)\}\subseteq S$ and the distance from $f_3(c_1)$ to $f_1(c_1)$ is strictly less than the distance from $f_2(c_1)$ to $f_1(c_1)$, a contradiction.
\end{proof}

\begin{lem}
\label{cyclechaingonlem}
Let $G$ be a graph with $g\geq 3$ blocks, each of which is a cycle of size $b\geq 3$, such that the block-cut tree of $G$ is a path and every pair of distinct cut vertices $v$ and $w$ that share a block are at distance exactly $k\geq 1$ from each other. Then $$\gon(G)=\min(\lfloor (g+3)/2\rfloor, \min\{t\in [b]:tk\equiv 0 \mod{b}\}).$$
\end{lem}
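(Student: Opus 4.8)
The plan is to establish the three inequalities $\gon(G)\le q$, $\gon(G)\le\lfloor(g+3)/2\rfloor$, and $\gon(G)\ge\min(q,\lfloor(g+3)/2\rfloor)$, where I abbreviate $q:=\min\{t\in[b]:tk\equiv 0\mod{b}\}$, the order of $k$ in $\mathbb Z_b$; note $q\ge 2$ since $1\le k\le\lfloor b/2\rfloor<b$. Throughout let $B_1,\dots,B_g$ be the blocks in the order they occur along the block-cut path, let $v_1,\dots,v_{g-1}$ be the cut vertices with $v_i\in V(B_i)\cap V(B_{i+1})$, and identify each cycle $B_i$ with $\mathbb Z_b$; by hypothesis consecutive cut vertices lie at distance $k$ in the block between them. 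A convenient preliminary is to refine \cref{cyclegonlem} to the two facts I will use on $C_b$: an effective divisor has positive rank if and only if its degree is at least $2$; and for an effective divisor $E$ of degree $d\ge1$ and a vertex $x$, $\max\{F(x):F\in\Div_+(C_b),\,F\sim E\}$ equals $d$ when $d\,x\equiv\sum_v v\,E(v)\pmod b$ and $d-1$ otherwise. In particular $q$ chips at a vertex of $C_b$ are equivalent to $q$ chips at a vertex distance $k$ away (as $qk\equiv0$), whereas fewer than $q$ chips at a vertex lose exactly one when pushed distance $k$.

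For $\gon(G)\le q$ I claim $q\cdot\mathbf 1_{v_1}$ has rank $\ge1$. It is $v_1$-reduced (Dhar's burning from $v_1$ consumes everything, there being no other chips), and the block-decomposition extension used in the proof of \cref{blockgonlem} turns the within-$B_{i+1}$ equivalence $q\cdot\mathbf 1_{v_i}\sim q\cdot\mathbf 1_{v_{i+1}}$ into an equivalence in $G$; so $q\cdot\mathbf 1_{v_1}\sim q\cdot\mathbf 1_{v_i}$ for all $i$, and each $q\cdot\mathbf 1_{v_i}$ is $v_i$-reduced. For a target $w\in V(B_j)$, apply \cref{blockgonlem} with $B:=B_j$, $w_0:=v_1$ if $j=1$ and $w_0:=v_{j-1}$ if $j\ge2$, and $D:=q\cdot\mathbf 1_{w_0}$: reaching $w$ in $G$ reduces to reaching $w$ in $B_j$ from a divisor of degree $q\ge2$, which is possible. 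For $\gon(G)\le\lfloor(g+3)/2\rfloor=\lceil g/2\rceil+1$ I take $N\cdot\mathbf 1_{v_c}$ with $N:=\lfloor(g+3)/2\rfloor$ and $c:=\lfloor g/2\rfloor$ (a central cut vertex). It is $v_c$-reduced; pushing its chips toward $v_1$ crosses $c-1$ internal cycles and toward $v_{g-1}$ crosses $g-c-1$ of them, and each such crossing costs at most one chip, so after redistribution every block still carries a divisor of degree at least $N-\max(c-1,g-c-1)\ge2$, and \cref{blockgonlem} again gives rank $\ge1$.

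For the lower bound, assume $n:=\gon(G)<q$ and $n<\lfloor(g+3)/2\rfloor$, aiming for a contradiction. Fix $u\in V(B_1)\setminus\{v_1\}$ and let $D$ be the $u$-reduced representative of a positive-rank divisor of degree $n$; then $D(u)\ge1$ by \cref{reducedmaxlem}, and since $m(v_1):=\max\{D'(v_1):D'\sim D,\,D'\ge0\}\ge1$, \cref{blockgonlem} forces $d_1:=\deg(\restr{D}{B_1})\ge2$. Burning from $u$ must consume all of $G$; examining how the fire enters and must traverse each $B_i$ shows $\restr{D}{B_i}$ is a $C_b$-divisor of the only shape completely burnable from the entry vertex — supported on the entry vertex together with at most one other vertex carrying exactly one chip. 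Hence $D(v_i)\le1$ for every cut vertex and, with $\epsilon_j:=\sum_{x\in V(B_j)\setminus\{v_{j-1}\}}D(x)\in\{0,1\}$ for $j\ge2$, we get $\sum_{j=2}^g\epsilon_j=n-d_1$ because $V(B_1)$ and the $V(B_j)\setminus\{v_{j-1}\}$ partition $V(G)$. The same analysis shows $\restr{D}{B_i\cup\dots\cup B_g}$ is $v_{i-1}$-reduced in that subchain, so successively re-reducing $D$ at $v_1,v_2,\dots$ never disturbs chips strictly to the right of the current cut vertex; writing $D_i$ for the $v_i$-reduced representative and $M_i:=D_i(v_i)=m(v_i)\ge1$, we obtain $\deg(\restr{D_i}{B_{i+1}})=M_i+\epsilon_{i+1}$, and \cref{blockgonlem} together with the $C_b$-computation gives $M_{i+1}\in\{M_i+\epsilon_{i+1}-1,\,M_i+\epsilon_{i+1}\}$ for $1\le i\le g-2$. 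Since $M_i\le n<q$ forces $M_ik\not\equiv0\pmod b$, the larger value is impossible when $\epsilon_{i+1}=0$, so $M_{i+1}-M_i\le 2\epsilon_{i+1}-1$. Summing over $i=1,\dots,g-2$, using $M_1\le d_1$ and the positive-rank requirement in $B_g$ (which forces $M_{g-1}+\epsilon_g\ge2$), and writing $E:=\sum_{j=2}^{g-1}\epsilon_j$, one gets $2\le M_{g-1}+\epsilon_g\le d_1+2E-(g-2)+\epsilon_g = d_1+E+(n-d_1)-(g-2)$, hence $E\ge g-n$; but $E=n-d_1-\epsilon_g\le n-2$, so $g-n\le n-2$, i.e. $n\ge\lceil(g+2)/2\rceil=\lfloor(g+3)/2\rfloor$, a contradiction.

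The main obstacle is the lower bound, and within it the two delicate points are: extracting from Dhar's algorithm the exact normal form of a reduced divisor on each cycle of the chain together with the stability of the chips on the far side of a cut vertex under re-reduction (this is what makes $\sum_j\epsilon_j=n-d_1$ legitimate across the varying representatives $D_i$); and the arithmetic that converts the one-step recursion $M_{i+1}-M_i\le 2\epsilon_{i+1}-1$, the boundary facts $d_1\ge2$ and $M_{g-1}+\epsilon_g\ge2$, and the global count into $2n\ge g+2$. The two upper-bound constructions are comparatively routine once \cref{blockgonlem} and the refined cycle computation are available; the only subtlety is tracking how a pile of fewer than $q$ chips disperses as it is pushed through successive cycles.
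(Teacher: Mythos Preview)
Your proposal is correct and follows essentially the same approach as the paper: both establish the upper bounds by placing chips at a cut vertex and tracking how many survive as they are pushed block-by-block via \cref{blockgonlem} and the cycle classification \cref{cyclegonlem}, and both prove the lower bound by taking a reduced representative, showing each non-initial block carries at most one stray chip, deriving the one-step recursion $M_{i+1}\le M_i+2\epsilon_{i+1}-1$ when $M_i<q$, and summing. The deviations --- reducing at a non-cut vertex $u\in B_1$ rather than at $v_1$, phrasing the structural constraint via Dhar's burning rather than explicit firing sets, and handling the far half of the $\lfloor(g+3)/2\rfloor$ upper bound directly rather than through an automorphism --- are cosmetic.
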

\begin{proof}
Label the blocks of $G$ with $B_0, B_1,\dots ,B_{g-1}$ so that $B_i$ intersects $B_j$ if and only if $|i-j|\leq 1$, and for $i\in [g-1]$ let $v_i$ be the vertex at the intersection of $B_{i-1}$ and $B_i$. Let $v_g$ be a vertex in $B_{g-1}$ at distance exactly $k$ from $v_{g-1}$. Let $D$ be a $v_1$ reduced effective divisor of $G$ of rank at least 1. Suppose for contradiction that for some $i\in [g-1]$, $\deg(\restr{D}{B_i- v_i})\geq 2$. If $D(v)\geq 2$ for some $v\in V(B_i- v_i)$, then set $A:=\{v\}$. Otherwise, there are distinct vertices $v$ and $w$ in $V(B_i-v_i)$ such that $D(v)=D(w)=1$. Let $P$ be the path from $v$ to $w$ in $B_i-v_i$, and set $A:=V(P)$. If $v_{i+1}\in A$, then set $A':=A\cup V(G_1)$, where $G_1$ is the connected component of $G-v_{i+1}$ containing $v_g$, unless $i+1=g$, in which case $G_1$ is the empty graph. If $v_{i+1}\notin A$, then set $A':=A$. Now $D-Q\mathbf{1}_{A'}\in \Div_+(G)$, contradicting the assumption that $D$ is $v_1$ reduced. Hence, for $i\in [g-1]$, $\deg(\restr{D}{B_i- v_i})\in\{0,1\}$. 

For $i\in [g]$, define $f(i)$ by 
$$f(i):=\max\{D'(v_i):D'\in \Div_+(G),D'\sim D\}.$$
Fix $i\in [g-1]$. By Lemma \ref{vreduced} there is a unique $v_i$-reduced effective divisor $D'$ equivalent to $D$, and by Lemma \ref{reducedmaxlem}, $D'(v_i)=f(i)$. Let $D_0,D_1,\dots ,D_t$ and $A_1,A_2,\dots ,A_t$ be defined as in Lemma \ref{chainlem}, with $D_0:=D$ and $D_t:=D'$. Suppose for contradiction that $v_i\in A_j$ for some $j\in [t]$. Then $v_i\in A_t$ and $D_{t-1}=D_t-Q\mathbf{1}_{A_t^C}$, contradicting the assumption that $D'$ is $v_i$-reduced. Hence  $v_0$ is not in $A_j$ for any $j\in [t]$.

  Let $G_1$ be the component of $G- v_i$ containing $v_g$, and suppose for contradiction that $A_j$ intersects $G_1$ for some $j\in [t]$, and let $j_0$ be the minimum element of $[t]$ such that $A_{j_0}$ intersects $G_1$. Let $A':=A_{j_0}\cap V(G_1)$, and let $D'':=D-Q\mathbf{1}_{A'}$. Consider an arbitrary vertex $v\in V(G_1)$.  Since $j_0$ is minimum and $v_0$ is not in $A_j$ for any $j\in [t]$, we have $D_{j_0-1}(v)=D(v)$. Hence, we have $D''(v)=D_{j_0}(v)\geq 0$. Consider an arbitrary vertex $v\notin V(G_1)$. Since $v\notin A'$, we have $D''(v)\geq D(v)\geq 0$. Therefore $D''$ is an effective divisor of $G$ equivalent to $D$, contradicting the assumption that $D$ is $v_1$-reduced. Hence, for $j\in [t]$, we have $A_j\cap(V(G_1)\cup \{v_i\})=\emptyset$. Hence, for $v\in V(G_1)$, we have $D'(v)=D(v)$.

By Lemma \ref{blockgonlem} with $B:=B_i$, $w_0:=v_i$ and $w:=v_{i+1}$, $f(i+1)=\max\{D''(v_{i+1}):D''\in \Div_+(B_i),D''\sim \restr{D'}{B_i}\}$. Let $C_b$ be the graph with $V(C_b):=\mathbb{Z}_b$ and $E(C_b):=\{xy:x-y\in\{1,-1\}\}$. Let $\theta$ be an isomorphism from $B_i$ to $C_b$ such that $\theta(v_i)=0$ and $\theta(v_{i+1})=k$. Let $F\in \Div_+(C_b)$ be the divisor such that $F(\theta(w)):=D'(w)$ for $w\in V(B_i)$, and let $F'\in \Div_+(C_b)$ be an effective divisor with $\deg(F')=\deg(F)$. By Lemma \ref{cyclegonlem}, we have $F\sim F'$ if and only if $\Delta(F,F')=0$.

Suppose first that $\deg(\restr{D}{B_i- v_i})=0$. Then $F(x):=0$ for $x\neq 0$. Now 
\[\Delta(F,F')=\sum_{v\in V(C_b)}F(v)v-F'(v)v=\sum_{v\in V(C_b)}-F'(v)v.\]
Since $\deg(F')=\deg(F)=f(i)$, we know $F'(k)\leq f(i)$. If $\supp(F'):=\{k\}$ and $F'(k):=f(i)$, then $\Delta(F,F')=0$ if and only if $kf(i)=0$, so $f(i+1)\leq f(i)$, with equality if and only if  $kf(i)=0$.

Suppose instead that $\deg(\restr{D}{B_i- v_i})\neq 0$. Recall that $\deg(\restr{D}{B_i- v_i})\in \{0,1\}$, so we have $\deg(\restr{D}{B_i- v_i})=1$. Now $\deg(F')=\deg(F)=f(i)+1$, so $F'(k)\leq f(i)+1$, and hence $f(i+1)\leq f(i)+1$. We now know the following:
\begin{align}
f(i+1) 
\begin{cases}
  = f(i)& \text{if}\ kf(i)\equiv 0\mod{b}  \text{ and} \deg(\restr{D}{B_i- v_i})=0, \\
  \leq f(i) +2\deg(\restr{D}{B_i- v_i})-1           & \text{if}\ kf(i)\not\equiv 0\mod{b}.
\end{cases}
\end{align}

Suppose $D$ satisfies $\supp(D):=\{v_1\}$ and $D(v_1):=\min\{t\in [b]:tk\equiv 0 \mod{b}\}$. Now $f(1)=\min\{t\in [b]:tk\equiv 0 \mod{b}\}$, and by $(1)$, we have $f(i)=\min\{t\in [b]:tk\equiv 0 \mod{b}\}$ for $i\in [g+1]$. Let $w$ be a vertex of $G$, and let $B_i$ be a block containing $w$. By Lemma \ref{vreduced} there is a $v_i$ reduced effective divisor $D'$ equivalent to $D$, and by Lemma \ref{reducedmaxlem}, $D'(v_i)=f(i)$. Since $b\geq 3$ and $1\leq k< b$, we have $f(i)=\min\{t\in [b]:tk\equiv 0 \mod{b}\}\geq 2$. Since $B_i$ is a cycle, by Lemma \ref{blockgonlem} there is some effective divisor $D''$ equivalent to $D'$ with $D''(w)\geq 1$. Hence $r(D)\geq 1$, and $\gon(G)\leq \min\{t\in [b]:tk\equiv 0 \mod{b}\}$. 

Suppose $\lfloor(g+3)/2\rfloor<\min\{t\in [b]:tk\equiv 0 \mod{b}\}$, and $D$ is the $v_1$-reduced effective divisor equivalent to the divisor $D'$ such that $\supp (D'):=\{v_{\lfloor g/2\rfloor}\}$ and $D'(v_{\lfloor g/2\rfloor}):=\lfloor(g+3)/2\rfloor$. Since $\deg(D)<\min\{t\in [b]:tk\equiv 0 \mod{b}\}$, for all $i\in [g]$ we have $kf(i)\not\equiv 0\mod{b}$. Hence, by $(1)$, for all $i\in [\lfloor g/2 \rfloor-1]$, we have $f(i)\geq f(i+1)-1$. By construction, $f(\lfloor g/2\rfloor)=\lfloor(g+3)/2\rfloor$. Hence
for $i\in [\lfloor g/2\rfloor]$, we have $f(i)\geq \lfloor (g+3)/2\rfloor-(\lfloor g/2 \rfloor-1)\geq 2$. As before, for all $i\in \{0,1,\dots ,\lfloor g/2\rfloor\}$ and each vertex $w\in V(B_i)$, there is some effective divisor $D''$ equivalent to $D$ with $D''(w)\geq 1$. Now let $\phi$ be an automorphism for $G$ such that $\phi(v_{\lfloor g/2\rfloor})=v_{\lceil g/2\rceil}$ and $\phi(v_1)=\phi(v_{g-1})$, and let $\phi(D')$ be given by $\phi(D')(v):=D'(\phi(v))$. Redefine $D$ to be the $v_1$-reduced effective divisor of $G$ equivalent to $\phi(D')$, and redefine $f$ accordingly. By the same argument as before, for $i\in [\lceil g/2\rceil]$, we have $f(i)\geq \lfloor (g+3)/2\rfloor-(\lceil g/2\rceil-1)\geq 2$.  As before, for all $i\in \{\lfloor(g+1)/2\rfloor, \lfloor(g-1)/2\rfloor+1,\dots,g-2, g-1\}$ and each vertex $w\in V(B_i)$, there is some effective divisor $D''$ equivalent to $D$ with $D''(w)\geq 1$. For every vertex $w\in V(G)$, either $w\in V(B_i)$ for some $i\in\{0,1,\lfloor g/2\rfloor\}$ or $\phi(w)\in V(B_i)$ for some $i\in \{0,1,\dots ,\lceil g/2\rceil\}$. Hence, $r(D')\geq 1$, and $\gon(G)\leq\lfloor (g+3)/2\rfloor$.

Suppose that $\deg(D)<\min(\lfloor (g+3)/2\rfloor, \min\{t\in [b]:tk\equiv 0 \mod{b}\})$. If for some $i\in[g]$, $f(i)=0$, then $r(D)<1$. Otherwise for $i\in [g]$, we have $1\leq f(i)\leq \deg(G)$, so $kf(i)\not\equiv 0\mod{b}$. Hence, $f(g)\leq f(1)+2(\deg(\restr{D}{G-B_0}))-(g-1)\leq 2\deg(D)-\deg(\restr{D}{B_0})-g+1$. If $\deg(\restr{D}{B_0})\leq 1$, then $r(\restr{D}{B_0})< 1$, so by Lemma \ref{blockgonlem}, $r(D)< 1$. Otherwise, note that  $2\deg(D)<2\lfloor (g+3)/2\rfloor$, so $f(g)\leq (g+1)-2-g+1=0$ and $r(D)<1$. Hence, $\gon(G)\geq \min(\lfloor (g+3)/2\rfloor, \min\{t\in [b]:tk\equiv 0 \mod{b}\})$.
\end{proof}

By carefully selecting values for $b$, Lemma \ref{cyclechaingonlem} allows us to construct a graph of genus $G$ and gonality $\lfloor (g+3)/2\rfloor$ for all $g\geq 3$. For $g\in \{0,1,2\}$, the $2\times (g+1)$ rectangular grid has genus g, and has gonality $\lfloor (g+3)/2\rfloor$ by Theorem \ref{twgonthm}. Thus our results provide an alternative method for proving part 2 of Conjecture \ref{gonalityconj}. 

We now answer Question \ref{subgraphgonquest}.


\begin{cor}\label{subgraphgonans} For every positive integer $t_0$, there exist connected graphs $G$ and $H$ such that $\gon(G)=t_0$, $\gon(H)=3$ and $G\subseteq H$.\end{cor}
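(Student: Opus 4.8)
The plan is to build $G$ as a chain of cycles and obtain $H$ by attaching a single new block that "short‑circuits" $G$, bringing the gonality down to $3$ while keeping $G$ as a subgraph. Concretely, I would apply \cref{cyclechaingonlem} with a suitable choice of parameters: take $b$ and $k$ so that $\min\{t\in[b]:tk\equiv 0\mod b\}$ is large (e.g.\ $b$ prime and $k\neq 0$, so this minimum equals $b$), and take the number of blocks $g$ so that $\lfloor(g+3)/2\rfloor=t_0$, i.e.\ $g=2t_0-3$ (or $2t_0-2$), with $b$ chosen larger than $t_0$. Then \cref{cyclechaingonlem} gives $\gon(G)=\lfloor(g+3)/2\rfloor=t_0$ for the resulting graph $G$, whose block‑cut tree is a path $B_0,\dots,B_{g-1}$ with consecutive cut vertices $v_1,\dots,v_g$ at pairwise distance $k$ inside each block.

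Next I would construct $H\supseteq G$ by adding new vertices and edges forming a path (or a short cycle) joining a vertex near one end of the block chain to a vertex near the other end, so that $H$ acquires an effective divisor of degree $3$ and rank $\geq 1$. A natural candidate: add a new vertex $u$ adjacent to $v_1$ and to $v_g$, together with enough structure that $\{u,v_1,v_g\}$ (or some degree‑$3$ divisor supported near these vertices) dominates every vertex of $H$; one checks $\gon(H)\geq 3$ because $H$ has a vertex of degree $\geq 2$ in at least two blocks / is not of gonality $\leq 2$ (e.g.\ it contains a subdivision of $K_4$ or a theta‑graph, or directly: it has treewidth $\geq 3$, so $\gon(H)\ge \tw(H)\ge 3$ by the \cite{BruynGijswijt} bound quoted in \cref{intro}). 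To show $\gon(H)\le 3$ I would exhibit the explicit winning chip configuration of $3$ chips: using the chip‑firing description from \cref{gonalitysection}, starting with chips near the added handle, fire sets that "rotate" around each cycle block in turn — because each block is a cycle, $3$ chips suffice to reach any vertex of a cycle once you can feed chips in from the handle at both the "entry" and "exit" cut vertices, which is precisely what the extra edges at $u$ provide.

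The main obstacle I expect is the lower bound $\gon(G)=t_0$ being genuinely used as an \emph{equality}, and the verification that adding the handle does not accidentally keep the gonality high: I must confirm that the degree‑$3$ divisor in $H$ really has rank $\ge 1$, which requires carefully tracking, block by block, the quantity $f(i)=\max\{D'(v_i):D'\sim D,\ D'\in\Div_+(H)\}$ as in the proof of \cref{cyclechaingonlem}, now with the extra "wrap‑around" block available so that $f$ no longer decays by $1$ per block but is replenished. The delicate point is choosing the attachment vertices and the number of handle edges so that this replenishment is exactly enough: too few and some far block has $f(i)\le 1$ (killing the rank), too many and $H$ might collapse to gonality $2$. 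I would pin this down by the same $v_1$‑reduced‑divisor argument used throughout \cref{cyclesection}, invoking \cref{blockgonlem}, \cref{reducedmaxlem} and the cycle characterisation \cref{cyclegonlem} to compute equivalences within each cycle block, and \cref{vreduced} to normalise.

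Finally, I would assemble the pieces: state the explicit $G$ and $H$, verify $G\subseteq H$ by construction, cite \cref{cyclechaingonlem} for $\gon(G)=t_0$, give the $3$‑chip winning configuration for the $\gon(H)\le 3$ direction and the treewidth (or $K_4$‑subdivision) argument for $\gon(H)\ge 3$, and conclude. Since $G$ can have arbitrarily large gonality while sitting inside a fixed‑gonality $H$, this answers \cref{subgraphgonquest} in the negative, and — composing with the fact that a subgraph is in particular a minor — re‑answers \cref{minorgonquest} as promised in the section preamble.
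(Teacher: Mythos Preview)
Your construction of $G$ via \cref{cyclechaingonlem} matches the paper's and is correct; the gap is entirely in $H$. The paper does not build $H$ by adding a handle: it takes $H$ to be the $3\times k$ rectangular grid for a suitable $k$, observes that the chain of cycles $G$ sits inside it as a subgraph, and reads off $\gon(H)=3$ directly from \cref{twgonthm}. No chip-firing analysis of $H$ is needed at all.

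Your handle construction fails already on the lower bound. Adjoining a single degree-$2$ vertex $u$ adjacent to $v_1$ and $v_g$ keeps the treewidth at $2$: take spine bags $S_i=\{u,v_i,v_{i+1}\}$ for $i=1,\dots,g-1$ arranged in a path, and for each cycle $B_i$ hang off $S_i$ the standard width-$2$ cycle decomposition with $v_i$ in every bag, attached at a bag that also contains $v_{i+1}$. Every bag has size $3$, so $\tw(H)=2$, and the van Dobben de Bruyn--Gijswijt bound yields only $\gon(H)\ge 2$; containing a theta graph is no help either, since theta graphs also have treewidth $2$. The upper bound $\gon(H)\le 3$ is likewise unverified and not obviously true: a two-edge shortcut is a very weak addition, and once $u$ is present the intermediate $v_i$ cease to be cut vertices, so the blocks $B_1,\dots,B_{g-1}$ merge into a single $2$-connected piece and the block-by-block recursion of \cref{cyclechaingonlem} that you plan to reuse no longer applies. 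You flag this as the delicate point but do not resolve it; the paper's grid choice sidesteps both issues at once, since the exact gonality of the grid and the embedding $G\subseteq H$ are already available.
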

\begin{proof}
When $t_0\leq 3$, the result is trivial, so suppose $t_0\geq 4$. Let $G$ be the graph with $2t_0-3$ blocks, each of which is a cycle of size $2t_0$, such that the block-cut forest of $G$ is a path and every pair of distinct cut vertices $v$ and $w$ that share a block are at distance exactly $t_0-1$ from each other. Then, by Lemma \ref{cyclechaingonlem}, $\gon(G)=\min(t_0, \min\{t\in [2t_0]:(t_0-1)t\equiv 0 \mod{2t_0}\})$. The least common  multiple of $t_0-1$ and $2t_0$ is either $t_0(t_0-1)$ or $2t_0(t_0-1)$ depending on whether $t_0$ is even or odd, so $\min\{t\in [2t_0]:(t_0-1)t\equiv 0 \mod{2t_0}\}\geq t_0$ and $\gon(G)=t_0$. Let $k:=(2t_0-3)(t_0-1)+1$, and let $H$ be the $3\times k$ rectangular grid. As illustrated in Figure \ref{3gonsubgraphpic}, $G\subseteq H$. By Theorem \ref{twgonthm}, $\gon(H)=3$, as required.
\end{proof}

\begin{center}

\begin{tikzpicture}[line width=1pt,vertex/.style={circle,inner sep=0pt,minimum size=0.2cm}] 

    \pgfmathsetmacro{\n}{3}; 
    \pgfmathsetmacro{\m}{\n-1};

  \node[draw=black,fill=gray] (01) at ($(0,0)$) [vertex] {};
  \node[draw=black,fill=gray] (02) at ($(\n,0)$) [vertex] {};
  \node[draw=black,fill=gray] (03) at ($(2*\n,0)$) [vertex] {};
  \node[draw=black,fill=gray] (04) at ($(3*\n,0)$) [vertex] {};
  \node[draw=black,fill=gray] (05) at ($(4*\n,0)$) [vertex] {};
 \node[draw=black,fill=gray] (06) at ($(5*\n,0)$) [vertex] {};
    \node[draw=black,fill=gray] (11) at ($(0,1)$) [vertex] {};
  \node[draw=black,fill=gray] (12) at ($(\n,1)$) [vertex] {};
  \node[draw=black,fill=gray] (13) at ($(2*\n,1)$) [vertex] {};
  \node[draw=black,fill=gray] (14) at ($(3*\n,1)$) [vertex] {};
  \node[draw=black,fill=gray] (15) at ($(4*\n,1)$) [vertex] {};
  \node[draw=black,fill=gray] (16) at ($(5*\n,1)$) [vertex] {};

  \node[draw=black,fill=gray] (22) at ($(\n,2)$) [vertex] {};
  \node[draw=black,fill=gray] (23) at ($(2*\n,2)$) [vertex] {};
  \node[draw=black,fill=gray] (24) at ($(3*\n,2)$) [vertex] {};
  \node[draw=black,fill=gray] (25) at ($(4*\n,2)$) [vertex] {};
  \node[draw=gray,fill=none] (26) at ($(5*\n,2)$) [vertex] {};
  \node[draw=black,fill=gray] (1a) at ($(0.33*\n,1)$) [vertex] {};
  \node[draw=black,fill=gray] (1b) at ($(0.67*\n,1)$) [vertex] {};
   \node[draw=black,fill=gray] (2a) at ($(1.33*\n,1)$) [vertex] {};
  \node[draw=black,fill=gray] (2b) at ($(1.67*\n,1)$) [vertex] {};
   \node[draw=black,fill=gray] (3a) at ($(2.33*\n,1)$) [vertex] {};
  \node[draw=black,fill=gray] (3b) at ($(2.67*\n,1)$) [vertex] {};
   \node[draw=black,fill=gray] (4a) at ($(3.33*\n,1)$) [vertex] {};
  \node[draw=black,fill=gray] (4b) at ($(3.67*\n,1)$) [vertex] {};
     \node[draw=black,fill=gray] (5a) at ($(4.33*\n,1)$) [vertex] {};
  \node[draw=black,fill=gray] (5b) at ($(4.67*\n,1)$) [vertex] {};
  
  \node[draw=black,fill=gray] (2a2) at ($(1.33*\n,2)$) [vertex] {};
  \node[draw=black,fill=gray] (2b2) at ($(1.67*\n,2)$) [vertex] {};

    \node[draw=black,fill=gray] (3a0) at ($(2.33*\n,0)$) [vertex] {};
  \node[draw=black,fill=gray] (3b0) at ($(2.66*\n,0)$) [vertex] {};

    \node[draw=black,fill=gray] (4a2) at ($(3.33*\n,2)$) [vertex] {};
  \node[draw=black,fill=gray] (4b2) at ($(3.67*\n,2)$) [vertex] {};

      \node[draw=black,fill=gray] (5a0) at ($(4.33*\n,0)$) [vertex] {};

  \node[draw=black,fill=gray] (1b0) at ($(0.67*\n,0)$) [vertex] {};

   \node[draw=black,fill=gray] (1a0) at ($(0.33*\n,0)$) [vertex] {};
    \node[draw=black,fill=gray] (5b0) at ($(4.67*\n,0)$) [vertex] {};

    \node[draw=gray,fill=none] (21) at ($(0,2)$) [vertex] {};
      \node[draw=gray,fill=none] (1a2) at ($(0.33*\n,2)$) [vertex] {};
      \node[draw=gray,fill=none] (1b2) at ($(0.67*\n,2)$) [vertex] {};

       \node[draw=gray,fill=none] (2a0) at ($(1.33*\n,0)$) [vertex] {};
       \node[draw=gray,fill=none] (2b0) at ($(1.67*\n,0)$) [vertex] {};

         \node[draw=gray,fill=none] (3a2) at ($(2.33*\n,2)$) [vertex] {};
         \node[draw=gray,fill=none] (3b2) at ($(2.67*\n,2)$) [vertex] {};

           \node[draw=gray,fill=none] (4a0) at ($(3.33*\n,0)$) [vertex] {};
           \node[draw=gray,fill=none] (4b0) at ($(3.67*\n,0)$) [vertex] {};

             \node[draw=gray,fill=none] (5a2) at ($(4.33*\n,2)$) [vertex] {};
             \node[draw=gray,fill=none] (5b2) at ($(4.67*\n,2)$) [vertex] {};

  
\draw[draw=black, line width=2pt](01)--(11);
\draw[draw=black, line width=2pt](02)--(12);
\draw[draw=black, line width=2pt](03)--(13);
\draw[draw=black, line width=2pt](04)--(14);
\draw[draw=black, line width=2pt](05)--(15);
\draw[draw=black, line width=2pt](06)--(16);
\draw[draw=black, line width=2pt](13)--(23);
\draw[draw=black, line width=2pt](14)--(24);

\draw[draw=black, line width=2pt](12)--(22);
\draw[draw=black, line width=2pt](15)--(25);
\draw[draw=gray, style=dashed](16)--(26);
\draw[draw=black, line width=2pt](01)--(1a0);
\draw[draw=black, line width=2pt](11)--(1a);

\draw[draw=black, line width=2pt](12)--(1b);

\draw[draw=black,line width=2pt](1b)--(1a);

\draw[draw=black, line width=2pt](12)--(2a);
\draw[draw=black, line width=2pt](13)--(2b);

\draw[draw=black,line width=2pt](2a)--(2b);

\draw[draw=black, line width=2pt](13)--(3a);
\draw[draw=black, line width=2pt](14)--(3b);

\draw[draw=black,line width=2pt](3a)--(3b);

\draw[draw=black, line width=2pt](14)--(4a);
\draw[draw=black, line width=2pt](15)--(4b);

\draw[draw=black, line width=2pt](4a)--(4b);

\draw[draw=black, line width=2pt](15)--(5a);
\draw[draw=black, line width=2pt](16)--(5b);

\draw[draw=black, line width=2pt](5a)--(5b);

\draw[draw=black, line width=2pt](22)--(2a2);
\draw[draw=black, line width=2pt](23)--(2b2);

\draw[draw=black,line width=2pt](2a2)--(2b2);

\draw[draw=black, line width=2pt](03)--(3a0);
\draw[draw=black, line width=2pt](04)--(3b0);

\draw[draw=black, line width=2pt](3a0)--(3b0);

\draw[draw=black, line width=2pt](24)--(4a2);
\draw[draw=black, line width=2pt](25)--(4b2);

\draw[draw=black, line width=2pt](4a2)--(4b2);

\draw[draw=black, line width=2pt](05)--(5a0);
\draw[draw=black, line width=2pt](02)--(1b0);

\draw[draw=black, line width=2pt](1a0)--(1b0);

\draw[draw=black, line width=2pt](5a0)--(5b0);



\draw[draw=gray, style=dashed](21)--(1a2);
\draw[draw=gray, style=dashed](22)--(1b2);

\draw[draw=gray, style=dashed](1a2)--(1b2);

\draw[draw=gray, style=dashed](02)--(2a0);
\draw[draw=gray, style=dashed](03)--(2b0);

\draw[draw=gray, style=dashed](2a0)--(2b0);

\draw[draw=gray, style=dashed](23)--(3a2);
\draw[draw=gray, style=dashed](24)--(3b2);

\draw[draw=gray, style=dashed](3a2)--(3b2);

\draw[draw=gray, style=dashed](04)--(4a0);
\draw[draw=gray, style=dashed](05)--(4b0);

\draw[draw=gray, style=dashed](4a0)--(4b0);

\draw[draw=gray, style=dashed](25)--(5a2);

\draw[draw=gray, style=dashed](5a2)--(5b2);

\draw[draw=gray, style=dashed](11)--(21);
\draw[draw=gray, style=dashed](1a2)--(1a);
\draw[draw=gray, style=dashed](1a)--(1a0);
\draw[draw=gray, style=dashed](1b)--(1b0);
\draw[draw=gray, style=dashed](1b)--(1b2);

\draw[draw=gray, style=dashed](2a2)--(2a);
\draw[draw=gray, style=dashed](2a)--(2a0);
\draw[draw=gray, style=dashed](2b)--(2b0);
\draw[draw=gray, style=dashed](2b)--(2b2);

\draw[draw=gray, style=dashed](3a2)--(3a);
\draw[draw=gray, style=dashed](3a)--(3a0);
\draw[draw=gray, style=dashed](3b)--(3b0);
\draw[draw=gray, style=dashed](3b)--(3b2);

\draw[draw=gray, style=dashed](4a2)--(4a);
\draw[draw=gray, style=dashed](4a)--(4a0);
\draw[draw=gray, style=dashed](4b)--(4b0);
\draw[draw=gray, style=dashed](4b)--(4b2);

\draw[draw=gray, style=dashed](5a2)--(5a);
\draw[draw=gray, style=dashed](5a)--(5a0);

\draw[draw=gray, style=dashed](5b)--(5b0);
\draw[draw=gray, style=dashed](5b)--(5b2);

\draw[draw=gray, style=dashed](5b2)--(26);
\draw[draw=black, line width=2pt](5b0)--(06);

\end{tikzpicture}
\captionsetup{hypcap=false}

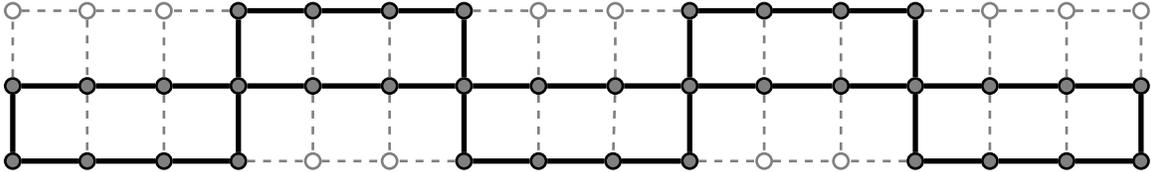
\captionof{figure}{$G$, in black, as a subgraph of $H$, when $t_0=4$.}\label{3gonsubgraphpic}
\end{center}

If the aim is to minimise the gonality of the supergraph $H\supseteq G$ in Corollary \ref{subgraphgonans}, then we can improve this construction slightly by replacing each cut vertex of $G$ with a bridge, maintaining the path structure of the block-cut vertex tree. For sufficiently large $N$, the resulting graph would be a subgraph of the $2\times N$ rectangular grid (as in Figure \ref{2gonsubgraphpic}). By considering the chip-firing game mentioned in Section \ref{gonalitysection}, it is clear that this operation does not affect the gonality. This can be seen by first noting that chips can be transferred back and forth between the endpoints of a bridge without moving chips to or from any other vertex, and then noting that for every bridge, every legal move can be broken down into two separate legal moves, one in which no chip is transferred across the bridge and one in which no chips move except for possibly one chip being transferred from one endpoint of the bridge to the other. Since the $2\times N$ rectangular grid has gonality 2 by Theorem \ref{twgonthm}, there are gonality-2 graphs with subgraphs of arbitrarily high gonality.

\begin{center}

\begin{tikzpicture}[line width=1pt,vertex/.style={circle,inner sep=0pt,minimum size=0.2cm}] 

    \pgfmathsetmacro{\n}{3};

  \node[draw=black,fill=gray] (00) at ($(0,0)$) [vertex] {};
  \node[draw=black,fill=gray] (10) at ($(0.75*\n,0)$) [vertex] {};
 \node[draw=black,fill=gray] (20) at ($(\n,0)$) [vertex] {};
 \node[draw=black,fill=gray] (30) at ($(1.75*\n,0)$) [vertex] {};
\node[draw=black,fill=gray] (40) at ($(2*\n,0)$) [vertex] {};
  \node[draw=black,fill=gray] (50) at ($(2.75*\n,0)$) [vertex] {};
 \node[draw=black,fill=gray] (60) at ($(3*\n,0)$) [vertex] {};
 \node[draw=black,fill=gray] (70) at ($(3.75*\n,0)$) [vertex] {};
  \node[draw=black,fill=gray] (80) at ($(4*\n,0)$) [vertex] {};
 \node[draw=black,fill=gray] (90) at ($(4.75*\n,0)$) [vertex] {};

   \node[draw=black,fill=gray] (01) at ($(0,1)$) [vertex] {};
  \node[draw=black,fill=gray] (11) at ($(0.75*\n,1)$) [vertex] {};
 \node[draw=black,fill=gray] (21) at ($(\n,1)$) [vertex] {};
 \node[draw=black,fill=gray] (31) at ($(1.75*\n,1)$) [vertex] {};
\node[draw=black,fill=gray] (41) at ($(2*\n,1)$) [vertex] {};
  \node[draw=black,fill=gray] (51) at ($(2.75*\n,1)$) [vertex] {};
 \node[draw=black,fill=gray] (61) at ($(3*\n,1)$) [vertex] {};
  \node[draw=black,fill=gray] (71) at ($(3.75*\n,1)$) [vertex] {};
  \node[draw=black,fill=gray] (81) at ($(4*\n,1)$) [vertex] {};
 \node[draw=black,fill=gray] (91) at ($(4.75*\n,1)$) [vertex] {};
 
   \node[draw=black,fill=gray] (00a) at ($(0.25*\n,0)$) [vertex] {};

 \node[draw=black,fill=gray] (00d) at ($(0.5*\n,0)$) [vertex] {};

   \node[draw=black,fill=gray] (20a) at ($(1.25*\n,0)$) [vertex] {};

 \node[draw=black,fill=gray] (20d) at ($(1.5*\n,0)$) [vertex] {};

   \node[draw=black,fill=gray] (40a) at ($(2.25*\n,0)$) [vertex] {};

 \node[draw=black,fill=gray] (40d) at ($(2.5*\n,0)$) [vertex] {};

   \node[draw=black,fill=gray] (01a) at ($(0.25*\n,1)$) [vertex] {};

 \node[draw=black,fill=gray] (01d) at ($(0.5*\n,1)$) [vertex] {};

   \node[draw=black,fill=gray] (21a) at ($(1.25*\n,1)$) [vertex] {};

 \node[draw=black,fill=gray] (21d) at ($(1.5*\n,1)$) [vertex] {};
 
    \node[draw=black,fill=gray] (41a) at ($(2.25*\n,1)$) [vertex] {};

 \node[draw=black,fill=gray] (41d) at ($(2.5*\n,1)$) [vertex] {};
 
     \node[draw=black,fill=gray] (60a) at ($(3.25*\n,0)$) [vertex] {};
 
 \node[draw=black,fill=gray] (60d) at ($(3.5*\n,0)$) [vertex] {};
 
     \node[draw=black,fill=gray] (61a) at ($(3.25*\n,1)$) [vertex] {};
       \node[draw=black,fill=gray] (61d) at ($(3.5*\n,1)$) [vertex] {};
       \node[draw=black,fill=gray] (81a) at ($(4.25*\n,1)$) [vertex] {};
       \node[draw=black,fill=gray] (81d) at ($(4.5*\n,1)$) [vertex] {};
   \node[draw=black,fill=gray] (80a) at ($(4.25*\n,0)$) [vertex] {};
       \node[draw=black,fill=gray] (80d) at ($(4.5*\n,0)$) [vertex] {};
 
\draw[draw=black, line width=2pt](00)--(01);
\draw[draw=black, line width=2pt](10)--(11);
\draw[draw=black, line width=2pt](20)--(21);
\draw[draw=black, line width=2pt](30)--(31);
\draw[draw=black, line width=2pt](40)--(41);
\draw[draw=black, line width=2pt](50)--(51);
\draw[draw=black, line width=2pt](60)--(61);

\draw[draw=black, line width=2pt](00)--(00a);
\draw[draw=black, line width=2pt](00d)--(10);

\draw[draw=black, line width=2pt](00a)--(00d);

\draw[draw=black, line width=2pt](20)--(20a);
\draw[draw=black, line width=2pt](20d)--(30);

\draw[draw=black, line width=2pt](20a)--(20d);

\draw[draw=black, line width=2pt](40)--(40a);
\draw[draw=black, line width=2pt](40d)--(50);

\draw[draw=black, line width=2pt](40a)--(40d);

\draw[draw=black, line width=2pt](01)--(01a);
\draw[draw=black, line width=2pt](01d)--(11);

\draw[draw=black, line width=2pt](01a)--(01d);

\draw[draw=black, line width=2pt](21)--(21a);
\draw[draw=black, line width=2pt](21d)--(31);
\draw[draw=black, line width=2pt](21a)--(21d);

\draw[draw=black, line width=2pt](41)--(41a);
\draw[draw=black, line width=2pt](41d)--(51);
\draw[draw=black, line width=2pt](41a)--(41d);

\draw[draw=black, line width=2pt](60)--(60a);
\draw[draw=black, line width=2pt](60d)--(70);

\draw[draw=black, line width=2pt](60a)--(60d);

\draw[draw=black, line width=2pt](61)--(61a);
\draw[draw=black, line width=2pt](61d)--(71);

\draw[draw=black, line width=2pt](61a)--(61d);

\draw[draw=black, line width=2pt](80)--(80a);
\draw[draw=black, line width=2pt](80d)--(90);

\draw[draw=black, line width=2pt](80a)--(80d);

\draw[draw=black, line width=2pt](81)--(81a);
\draw[draw=black, line width=2pt](81d)--(91);

\draw[draw=black, line width=2pt](81a)--(81d);

\draw[draw=black, line width=2pt](10)--(20);
\draw[draw=black, line width=2pt](30)--(40);
\draw[draw=black, line width=2pt](50)--(60);

\draw[draw=gray, style=dashed](11)--(21);
\draw[draw=gray, style=dashed](31)--(41);
\draw[draw=gray, style=dashed](51)--(61);

\draw[draw=gray, style=dashed](00a)--(01a);
\draw[draw=gray, style=dashed](00d)--(01d);
\draw[draw=gray, style=dashed](20a)--(21a);
\draw[draw=gray, style=dashed](20d)--(21d);
\draw[draw=gray, style=dashed](40a)--(41a);
\draw[draw=gray, style=dashed](40d)--(41d);
\draw[draw=gray, style=dashed](60a)--(61a);

\draw[draw=gray, style=dashed](71)--(81);
\draw[draw=black, line width=2pt](70)--(80);
\draw[draw=gray, style=dashed](80d)--(81d);
\draw[draw=gray, style=dashed](60d)--(61d);
\draw[draw=gray, style=dashed](80a)--(81a);
\draw[draw=black, line width=2pt](70)--(71);
\draw[draw=black, line width=2pt](90)--(91);
\draw[draw=black, line width=2pt](80)--(81);
\end{tikzpicture}
\captionsetup{hypcap=false}

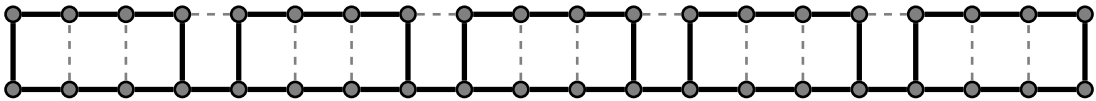
\captionof{figure}{A modification of the graph $G$ in Figure \ref{3gonsubgraphpic} as a subgraph of a rectangular grid.}\label{2gonsubgraphpic}
\end{center}
\section*{Acknowledgements}
The author wishes to acknowledge and thank Darcy Best, Thomas Hendrey, Timothy Wilson and David Wood for helpful suggestions and discussions.
\bibliographystyle{myNatbibStyle}
\bibliography{phdbib}
\end{document}